\documentclass{elsarticle}
\usepackage{amssymb,amsmath}
\usepackage{amsthm}
\newtheorem{Theorem}{Theorem}[section]

\newtheorem{Definition}[Theorem]{Definition}

\newtheorem{Lemma}[Theorem]{Lemma}

\newtheorem{Example}[Theorem]{Example}

\newtheorem{Remark}[Theorem]{Remark}

\newtheorem{Proposition}[Theorem]{Proposition}

\numberwithin{equation}{section}

\newcommand{\DS}{\displaystyle}

\newcommand{\di}{\mathrm{diam}\,}

\begin{document}

\title{Some fixed points results for extended nonlinear contractions defined on  $d-CS$ spaces}

\author{I. D. Aran{\dj}elovi\'c}
\address{University of Belgrade, Faculty of Mechanical
Engineering, Kraljice Marije 16, 11000, Beograd, Serbia}
\emailauthor{iarandjelovic@mas.bg.ac.rs}{I. D. Aran{\dj}elovi\'c}

\author{Z. D. Mitrovi\'c\corref{cor1}}
\address{University of Banja Luka, Faculty of Electrical Engineering, Patre 5, 78000, Banja Luka, Bosnia and Herzegovina}

\emailauthor{zoran.mitrovic@etf.unibl.org}{Z. D. Mitrovi\'c\corref{cor1}}

\cortext[cor1]{Corresponding author}

\def\Dj{\hbox{D\kern-.73em\raise.30ex\hbox{-} \raise-.30ex\hbox{}}}
 \def\dj{\hbox{d\kern-.33em\raise.80ex\hbox{-} \raise-.80ex\hbox{\kern-.40em}}}
\def\dD{\hbox{d\kern-.33em\raise.75ex\hbox{-}\raise-.75ex\hbox{}}}

\begin{abstract} In this paper we shall introduced the class 
of $d-CS$ spaces and so we shall obtained topological approach to  large Kasahara spaces. This class include complete symmetric spaces, complete quasi $b$-metric spaces and complete $b$-spaces, but not include $d^*$-complete topological spaces and $d$-complete topological spaces. Further will be presented fixed theorem for nonlinear extended-contraction defined on this spaces, which generalize earlier results obtaned by Browder, Matkowski, Jachymski, Matkowski and \'Swiatkowski, Aran{\dj}elovi\'{c} and Ke\v{c}ki\'{c}  and Alshehri, Aran{\dj}elovi\'c and Shahzad. Also, we obtain the fixed point theorem for nonlinear extended-contraction defined on quasi-metric spaces which extended recent result's of Pasicki, obtained for pseudo-metric spaces.

 
\end{abstract}

\begin{keyword}
fixed point\sep iterative sequences\sep $d-CS$ complete topological space\sep quasi $b$-metric space
\MSC[2020] 54H25\sep 54E99\sep 47H10
\end{keyword}

\maketitle

\section{Introduction}

Exetended concepts of continuity and convergence, in paper \cite{MF1} M.\ Fr\'{e}\-chet  introduced the classes of metric spaces and $E$-spaces (in modern terminology so called symmetric spaces). Later in \cite{MF2}, he introduced the notion of $L$-spaces and presented one different approach to this concepts. In this class of spaces, which need not be topological spaces, the class of convergent sequences is axiomatic introduced. S. Kasahara \cite{SK} considered fixed point results on  $d$-complete $L$ spaces, also called Kasahara spaces (see \cite{IR}). T.\ Hicks \cite{TH} defined the notion of  $d$-complete topological spaces and so obtained topological approach to  these spaces, which is extended in \cite{MAMAS}  in form  of  $d^*$-complete topological spaces. Large Kasahara spaces was introduced by I.~Rus \cite{IR}. Concept of quasi-metric space was presented by W. Wilson \cite{WW}.\\\indent
In this paper we shall introduced the class 
of $d - CS$ spaces and so we shall obtained topological approach to  large Kasahara spaces. This class include complete symmetric spaces, complete quasi $b$-metric spaces and complete $b$-spaces, but not include $d^*$-complete topological spaces and $d$-complete topological spaces. Further will be presented fixed theorem for nonlinear extended-contraction defined on this spaces, which generalize earlier results obtaned by Browder \cite{FB}, Matkowski \cite{JM}, Jachymski, Matkowski and \'Swiatkowski \cite{JMS}, Aran{\dj}elovi\'{c} and Ke\v{c}ki\'{c} \cite{AK} and Alshehri, Aran{\dj}elovi\'c and Shahzad \cite{AAS}. Also, we obtain the fixed point theorem for nonlinear extended-contraction defined on quasi-metric spaces which extended recent result's of Pasicki \cite{LP1,LP2}, obtained for pseudo - metric spaces.
\section{Preliminary Notes}

\subsection{Fixed point theory}
Let $X$ be a nonempty set and $f:X\rightarrow X$ be an arbitrary mapping. An arbitrary element $x\in X$ is a fixed point for
 $f$ if $x=f(x)$. For $\vartheta_0\in X$, we say that a sequence $(\vartheta_n)$ defined by
 $\vartheta_n=f^n(\vartheta_0)$ is a sequence of Picard iterates of $f$ at point $\vartheta_0$ or that $(\vartheta_n)$
is the orbit of $f$ at point $\vartheta_0$.\\\indent
Let  $d: X\times X \rightarrow [0,+\infty)$  be a mapping, then:

\noindent (1) $f$ is contraction if there exists  $\alpha\in [0,1)$ such that 
\begin{equation}
d(f(x),f(y))\le \alpha d(x,y),
\end{equation}
for all $x,y\in X$;\\
(2) $f$ is nonlinear contraction if there exists  function $\varphi:[0,+\infty) \rightarrow [0,+\infty)$ such that $\varphi(r) < r$ for any $r>0$  and  
\begin{equation}
d(f(x),f(y))\le \varphi(d(x,y)), 
\end{equation}
 for all $x,y\in X$;\\
(3) $f$ is nonlinear extended-contraction if there exists  function $\varphi:[0,+\infty) \rightarrow [0,+\infty)$ such that $\varphi(r) < r$ for any $r>0$ and 
\begin{equation}
d(f(x),f(y))\le \max\{\varphi (d(x,y)),\varphi (d(x,f(x))),\varphi(d(y,f(y)))\}, 
\end{equation}
 for all $x,y\in X$.
 
In 1968 first fixed point theorems for nonlinear contractions on complete metric space were obtained by F.\ Browder \cite{FB} (his statemet has assumptions that considered metric space is bounded and that comparison function $\varphi$ is monotone non - decreasing and right continuous), R.\ M.\ Bianchini and M.\ Grandolfi \cite{BG} (their statemet has assumptions that $\varphi$ is monotone non - decreasing and that $\sum_{n=1}^{\infty}\varphi^n(t)<\infty$ for each $t>0$), M.\ Furi \cite{MF} and A.~Zitarosa \cite{AZ} (it has assumption that $\varphi$ is monotone non - decreasing, such that for each $r>0$ $\DS{\lim_{n\rightarrow\infty}\varphi^n(r) = 0}$, and that orbits of $f$ are bounded).

\begin{Remark} In A.\ Zitarosa \cite{AZ} theorem assumption $\varphi (t)<t$ was omitted beacuse it follows from two other assumptions.
\end{Remark}

\begin{Remark} Results of R.\ M.\ Bianchini and M.\ Grandolfi \cite{BG}, M.\ Furi \cite{MF} and Browder  \cite{FB} are special cases of the Theorem of A.\ Zitarosa \cite{AZ}.
\end{Remark}

\bigskip In 1969 D.\ W.\ Boyd, and J.\ S.\ W.\ Wong \cite{BW} presented two new fixed point results for nonlinear contractions under assumptions:

\medskip\noindent a) $\varphi$ is upper semi continuous from the right (i.\ e.\ for every $r\ge 0$ $\DS{\overline{\lim_{t\rightarrow r+}}} \varphi(t)\le\varphi(r)$, or

\medskip\noindent b) $\DS{\overline{\lim_{t\rightarrow r+}}} \varphi(t)< r$ for each $r\ge 0$.

\medskip\noindent This conditions are eqivalent, but asssumptions of Boyd-Wong and Zitarosa are uncomparable. 

\begin{Remark} Result of Browder \cite{FB} follows from the  both theorems of Boyd and Wong \cite{BW}.
\end{Remark}

In 1975 J.\ Matkowski \cite{JM} that condition "orbits of $f$ are bounded" in Theorem of Zitarosa can be omitted. This result extended Theorem of Zitarosa but it is uncomparable with both theorems of Boyd and Wong.

After a long time, in 2016 L.\ Pasicki \cite{LP1} introduced new nonlinear contractive condition which include assumptions of  J.\ Matkowski \cite{JM} and Boyd and Wong \cite{BW}. Result of Pasicki was formulated for mappings defined on pseudo-metric spaces. His theorem has assumption that for each $t > 0$ there exists $\varepsilon > 0$ such that 
$$\varphi (s) \le t\enskip \mbox{ for any } s\in (t,t+\varepsilon).$$

Fixed point results for nonlinear contractions defined on semi-metric spaces was obtained by J.\ Jachymski, J.\ Matkowski and T.\ \'Swiatkowski \cite{JMS} and for nonlinear contractions defined on symmetric spaces was presented by I.\ D.\ Aran{\dj}elovi\'{c} and D.\ J.\ Ke\v{c}ki\'{c} \cite{AK}.

First fixed point result on nonlinear extended-contractions was presented by M.\ Maiti, J.\ Achari, T.\ K.\ Pal \cite{MAP}.
It proved that any extended contraction $f$ defined on complete metric space $(X,d)$, where $\varphi$ satisfies condition $a)$ of Boyd-Wong's theorem and function $A:X\rightarrow \mathbb{R}$ defined by $A(x)=d(x,f(x))$ is lower semicontinuous,
has an unique fixed point which is unique limit of all sequences of its Picard iterations. In 2016 L.\ Paciski \cite{LP1} generalize this result to pseudo-metric space. Corresponding result for nonlinear extended-contractions defined on symmetric spaces was presented by S.\ Alshehri, I.\ Aran{\dj}elovi\'c and N. Shahzad \cite{AAS}. Fixed point theorem for nonlinear extended-contraction defined on cone metric spaces was proved in \cite{AKR}. In 2018 L.\ Paciski \cite{LP2} obtained corresponding result to quasi pseudo-metric space.

\medskip The first part of  next statement was formulated and proved by D.~Adam\-ovi\'c \cite{DA}.
Its second part was presented in \cite{AK}.

\begin{Lemma} \label{L2.1} (Aran{\dj}elovi\'c-Ke\v cki\'c \cite{AK}) Let $X$ be the nonempty set and the mapping $f:X \rightarrow X$. Let $l$ be a positive integer such that $f^{l}$ possesses a unique fixed point, say $u_*$. Then\ $u_*$ is the unique fixed point of $f$. Also,  if $X$ is a topological space and any sequence of Picard
iterates defined by $f^l$ is convergent  to $u_*$, then the sequence of Picard iterates defined by $f$ is
convergent to $u_*$.
\end{Lemma}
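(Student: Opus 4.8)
The plan is to treat the two assertions separately: the first is a purely algebraic fact about commuting iterates, and the second is a subsequence-splitting argument carried out at the level of neighbourhoods so that it works in an arbitrary topological space.

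First I would show that $u_*$ is a fixed point of $f$. Since $f\circ f^{l}=f^{l}\circ f$, applying $f$ to the equality $f^{l}(u_*)=u_*$ gives $f^{l}(f(u_*))=f(f^{l}(u_*))=f(u_*)$, so $f(u_*)$ is a fixed point of $f^{l}$; by the assumed uniqueness, $f(u_*)=u_*$. For uniqueness of the fixed point of $f$: if $v=f(v)$, then iterating yields $f^{l}(v)=v$, so $v$ is a fixed point of $f^{l}$ and hence $v=u_*$. This already settles the first claim.

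For the second claim, let $(\vartheta_n)$ with $\vartheta_n=f^{n}(\vartheta_0)$ be an arbitrary orbit of $f$ in the topological space $X$. For each residue $j\in\{0,1,\dots,l-1\}$ consider the subsequence $(\vartheta_{nl+j})_{n\ge 0}$. Because $\vartheta_{nl+j}=f^{nl+j}(\vartheta_0)=(f^{l})^{n}\big(f^{j}(\vartheta_0)\big)$, this subsequence is precisely the orbit of $f^{l}$ at the point $f^{j}(\vartheta_0)$, and therefore converges to $u_*$ by hypothesis. Thus $(\vartheta_n)$ is the union of the $l$ subsequences $(\vartheta_{nl+j})_{n\ge 0}$, $j=0,\dots,l-1$, each converging to $u_*$. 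It then remains to record the elementary fact, valid in any topological space, that a sequence which splits into finitely many subsequences all converging to the same point $x$ must itself converge to $x$: given a neighbourhood $U$ of $x$, choose $N_j$ with $\vartheta_{nl+j}\in U$ for all $n\ge N_j$, put $N=\max_{0\le j\le l-1}N_j$, and observe that every $m\ge lN$ can be written $m=nl+j$ with $0\le j<l$ and $n\ge N$, so $\vartheta_m\in U$.

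I do not expect a real obstacle here; the only point requiring care — rather than difficulty — is that $X$ is merely assumed to be a topological space, so the last step must be argued directly with neighbourhoods and not via any metric or first-countability considerations. The decomposition of the orbit of $f$ into $l$ orbits of $f^{l}$ is exactly what allows the argument to be completed at this level of generality.
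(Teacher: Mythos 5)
Your proof is correct and is exactly the standard argument (commutation of $f$ with $f^{l}$ for existence/uniqueness, and the decomposition of an $f$-orbit into $l$ orbits of $f^{l}$ plus the finitely-many-subsequences fact for convergence); the paper itself states this lemma without proof, citing Adamovi\'c \cite{DA} and Aran{\dj}elovi\'c--Ke\v{c}ki\'c \cite{AK}, and your argument matches what is done there. The neighbourhood-level formulation of the last step is indeed the right level of care for an arbitrary topological space.
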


\subsection{Topological definitions}
In this subsection we give some definitions.
\begin{Definition}\label{D2.1} Let $X$ be a nonempty set, 
$d: X\times X\rightarrow [0,+\infty)$ and $s\in\mathbb{R}$. We define the following five properties:\\
(A0)  $d (x,y) = 0$ implies  $x = y$;\\
(A1)  $d (x,y) = 0$ if and only if  $x= y$;\\
(A2)  $d(x,y) = d (y,x)$;\\
(A3)  $d (x,y)\le d(x,z) + d (z,y)$;\\
If for any $x,y,z\in X$, $(X,d)$ satisfies:\\
(A0), (A2) and (A3), then  $(X,d)$  is pseudo-metric space;\\
(A1), (A2) and (A3), then  $(X,d)$ is metric space;\\
(A1) and (A2), then  $(X,d)$  is symmetric space;\\
(A0) and (A3), then  $(X,d)$  is pseudo quasi-metric space;\\
(A1) and (A3), then  $(X,d)$  is quasi-metric space.
\end{Definition}

We shall used classical term  pseudo-metric spaces, as in famous textbooks \cite{JK} or \cite{SL}). In the last 20 years this notion was reintroduced by P.\  Hitzler and A.\ K.\ Seda \cite{HS} (they used term dislocated metric spaces) and A.\ Amini - Harandi \cite{AAH} (he used term metric like spaces).
\begin{Example}Let $X=\mathbb{R}$ and $d:X\times X\rightarrow [0, +\infty)$ defined by
\begin{equation*}
d(x, y)=|y-x|+\frac{y-x}{2},
\end{equation*}
for $x, y\in X$. Then $(X, d)$ is a quasi-metric space. Note that (A2) does not hold for the mapping $d$.
\end{Example}
Let $(X,d)$ be a quasi-metric spaces and $r_n$ be a sequence of nonnegative real numbers such that $r_{n+1} \le r_n$ and $\lim r_n = 0$. A quasi-metric space is a topological space with $\{B(x,r_n)\}$, as a
base of neighborhood filter of the point $x$ where 
$$B(x,r_n)= \{y\in X:d(y,x)<r_n\}.$$ 

\begin{Definition}\label{D2.2} 
Let $(X,d)$ be a quasi-metric space. A sequence $( x_{n})\subseteq X$ \ is said to be left Cauchy sequence if for given $\varepsilon >0$ there is
$N\in \mathbb{N}$ such that $d(x_{n},x_{m})<\varepsilon $, for all
$m>n\geq N.$ Then $(X,d)$ is complete if and only if every
left Cauchy sequence converges to some $x\in X$. \end{Definition}

Let $(X,d)$ and $(Y,d)$ be two quasi-metric spaces. A mapping $f:X\rightarrow Y$ is sequentially continuous if for each sequence
 $(\vartheta_n)\subseteq X$ from $\lim d(\vartheta_{n},p)=0$, it follows that $\lim d(f(\vartheta_n,f(p))=0$.

\subsection{Comparison Functions}

Let $\varphi:[0,+\infty) \rightarrow [0,+\infty)$ be mapping which satisfies:
$\varphi (0) = 0$ and $\varphi(r) < r$ for any $r>0$. Then $\varphi$ is comparison function.

\begin{Proposition}\label{P2.1} Let $\varphi:[0,+\infty) \rightarrow [0,+\infty)$ be monotone non-decreasing comparison function, such that for each $r>0$ $\DS{\lim_{n\rightarrow\infty}\varphi^n(r) = 0}$. If there exists $r>0$, which satisfies
$$\overline{\lim_{t\rightarrow r+}}\varphi (t)=r$$ then there exists $\varepsilon > 0$ such that 
$$\varphi (t) = r\enskip \mbox{ for any } t\in (r,r+\varepsilon).$$
\end{Proposition}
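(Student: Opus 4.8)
The plan is to reduce everything to monotonicity together with the orbit condition $\varphi^n(r)\to 0$. First I would observe that, because $\varphi$ is non-decreasing, the value $\varphi(t)$ decreases (weakly) and stays bounded below as $t\downarrow r$; hence the one-sided limit $\lim_{t\to r+}\varphi(t)$ exists, equals $\inf_{t>r}\varphi(t)$, and therefore coincides with $\overline{\lim_{t\to r+}}\varphi(t)$. So the hypothesis $\overline{\lim_{t\to r+}}\varphi(t)=r$ is equivalent to $\inf_{t>r}\varphi(t)=r$, which in particular forces $\varphi(t)\ge r$ for every $t>r$.

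Next I would argue by contradiction, assuming the conclusion fails. Its negation says that for every $\varepsilon>0$ there is some $t\in(r,r+\varepsilon)$ with $\varphi(t)\neq r$; since $\varphi(t)\ge r$ on $(r,+\infty)$, this means $\varphi(t)>r$. Taking $\varepsilon=1/k$ I obtain a sequence $t_k\downarrow r$ with $\varphi(t_k)>r$. The key step is then: fix an arbitrary $s>r$; for $k$ large enough $r<t_k<s$, and monotonicity gives $\varphi(s)\ge\varphi(t_k)>r$. Thus $\varphi(s)>r$ for \emph{every} $s>r$, i.e. the single bad sequence propagates to the whole right half-line.

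Finally I would iterate. Fix any $t_0>r$. Since $\varphi(s)>r$ for all $s>r$, a trivial induction gives $\varphi^n(t_0)>r$ for all $n\ge 0$ (the inductive step applies the last assertion with $s=\varphi^{n}(t_0)>r$). Hence $\varphi^n(t_0)\ge r>0$ for all $n$, contradicting $\lim_{n\to\infty}\varphi^n(t_0)=0$. This contradiction yields an $\varepsilon>0$ with $\varphi(t)\le r$ for all $t\in(r,r+\varepsilon)$; combined with $\varphi(t)\ge r$ from the first step, we get $\varphi(t)=r$ on $(r,r+\varepsilon)$, as claimed.

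I expect the only genuinely non-routine point to be the propagation step, namely turning ``$\varphi(t_k)>r$ along one sequence $t_k\downarrow r$'' into ``$\varphi(s)>r$ for all $s>r$'' via monotonicity; the rest (the collapse of the upper right-hand limit to an infimum, and the induction against the orbit condition) is bookkeeping.
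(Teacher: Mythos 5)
Your proof is correct and follows essentially the same route as the paper's: use monotonicity to turn the upper limit into the one-sided limit $\inf_{t>r}\varphi(t)=r$ (so $\varphi\ge r$ to the right of $r$), then observe that failure of the conclusion would make $\varphi$ map $(r,+\infty)$ into itself, contradicting $\varphi^n(t_0)\to 0$. You simply spell out the propagation step that the paper leaves implicit.
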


\begin{proof} From 
$$\overline{\lim_{t\rightarrow r+}}\varphi (t)=r,$$
it follows that
$$\lim_{t\rightarrow r+}\varphi (t)=r,$$
because $\varphi$ is monotone non - decreasing. If for each $\varepsilon >0$ there exists $t\in (r,r+\varepsilon)$ such that
$\varphi (t)>r$, then we obtain that $\varphi:(r,+\infty)\rightarrow (r,+\infty)$ which is contradictin with $\DS{\lim_{n\rightarrow\infty}\varphi^n(r+\varepsilon) = 0}$.
\end{proof}

Proposition \ref{P2.1} implies that fixed point theorem of Matkowski \cite{JM} is included in result of Pasicki \cite{LP1}. 

\begin{Proposition}\label{P2.2} Let $\varphi:[0,+\infty) \rightarrow [0,+\infty)$ be comparison function, such that one of the following conditions be
satisfied:\\
$\alpha$) $\DS{\overline{\lim_{t\rightarrow s+}}\varphi (t)< s}$ for each $s>0$;\\
$\beta$) for any $s>0$ there exists $\varepsilon >0$ such that $\varphi (t) \le s\enskip \text{ for any } t\in (s,s+\varepsilon)$,\\
then  $\DS{\lim_{n\rightarrow+\infty}\varphi^n(s) = 0}$ and for each $s>0$. 
\end{Proposition}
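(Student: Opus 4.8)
The plan is to fix $s>0$ and study the orbit $s_n:=\varphi^n(s)$ directly. First I would handle the degenerate case: if $s_n=0$ for some $n$, then $\varphi(0)=0$ forces $s_m=0$ for all $m\ge n$, and there is nothing to prove. So assume $s_n>0$ for every $n$. Since $\varphi$ is a comparison function, $s_{n+1}=\varphi(s_n)<s_n$, so $(s_n)$ is strictly decreasing and bounded below by $0$; by the monotone convergence theorem it converges to some $L\ge 0$. The proposition is equivalent to $L=0$, which I would prove by contradiction, assuming $L>0$.

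Before doing that, I would note that hypothesis $\alpha)$ is stronger than $\beta)$: if $\overline{\lim_{t\rightarrow s+}}\varphi(t)<s$, then by the definition of the upper limit there is $\varepsilon>0$ with $\varphi(t)<s$, hence $\varphi(t)\le s$, for all $t\in(s,s+\varepsilon)$, which is exactly $\beta)$. Thus it suffices to derive the contradiction under $\beta)$ alone. Apply $\beta)$ at the point $L>0$: there is $\varepsilon>0$ such that $\varphi(t)\le L$ for every $t\in(L,L+\varepsilon)$. Since $(s_n)$ decreases strictly to $L$, we have $s_n>L$ for all $n$ and $s_n<L+\varepsilon$ for all $n$ large enough; hence $s_n\in(L,L+\varepsilon)$ eventually, and for such $n$ we get $s_{n+1}=\varphi(s_n)\le L$, contradicting $s_{n+1}>L$. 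Therefore $L=0$, i.e.\ $\varphi^n(s)\to 0$, and since $s>0$ was arbitrary we are done.

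The argument is short, and the only point needing attention is the bookkeeping with strict versus non-strict inequalities: the orbit stays strictly above $L$, while $\beta)$ delivers only $\varphi(t)\le L$, and it is exactly this mismatch that closes the contradiction. I would also double-check that no monotonicity of $\varphi$ is used here, unlike in Proposition \ref{P2.1}, so the result applies to general comparison functions satisfying $\alpha)$ or $\beta)$.
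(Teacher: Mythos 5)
Your proof is correct, and in case $\alpha$) it coincides in structure with the paper's argument: the orbit $s_n=\varphi^n(s)$ is (strictly) decreasing, hence converges to some $L\ge 0$, and the local hypothesis at $L$ forces a contradiction if $L>0$. Where you genuinely depart from the paper is in the treatment of $\beta$). The paper handles $\alpha$) directly and then dismisses $\beta$) with the single sentence that $\varphi^2$ then satisfies $\alpha$); that claim is not justified and is in fact false for general comparison functions --- from $\varphi(t)\le s$ on $(s,s+\varepsilon)$ one only gets $\varphi^2(t)<s$ pointwise there, which does not prevent $\overline{\lim}_{t\rightarrow s+}\varphi^2(t)=s$ (take $\varphi(u)=2u-s$ just below $s$ and let $\varphi(t)$ approach $s$ from below as $t\rightarrow s+$). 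You instead observe that $\alpha$) implies $\beta$) and run the contradiction once, directly under $\beta$): the strictly decreasing orbit stays strictly above its limit $L$, while $\beta$) applied at $L$ forces some iterate into $(L,L+\varepsilon)$ and hence the next iterate down to $\le L$. This unification is cleaner, uses no monotonicity of $\varphi$, and actually repairs the gap in the paper's disposal of case $\beta$); the only bookkeeping it relies on --- that a strictly decreasing sequence converging to $L$ satisfies $s_n>L$ for all $n$ --- you state and it is immediate.
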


\begin{proof}

Let $\alpha$) be satisfied. Suppose that $s>0$ and $\varphi^n(s)>0$, for any $n$. Sequence $\varphi^n(s)$ is monotone decreasing
because $\varphi(r) < r$ and $0$ is one its lower bound. So it is convergent sequence. 
Let $\lim_{n \rightarrow +\infty} \varphi^n(s) = b > 0$. 
Then there exists positive integer $n_0$ such that $n\ge n_0$ implies $\varphi^n(s)\le b$,
because $$\overline{\lim_{t\to b+}}\varphi (t)< b.$$ So we obtain the contradiction, because $\varphi^n(s)>\varphi^{n+1}(s)>b$ for any $n$. 

Let $\beta$) be satisfied. Then $\phi^2$ satisfies $\alpha$).

\end{proof}

\begin{Lemma}\label{L2.2} Let $\varphi:[0,+\infty) \rightarrow [0,+\infty)$ be comparison function, which satisfies  
$$\overline{\lim_{t\rightarrow r-}}\varphi (t)< r  \mbox{ and } \overline{\lim_{t\rightarrow r+}}\varphi (t)\le r$$ for each $r>0$, such that
for every $s>0$ which satisfies
$$\overline{\lim_{t\rightarrow s+}}\varphi(t)=s,$$ there exists $\varepsilon > 0$ such that 
$$\varphi (t) = s\enskip \text{ for any } t\in (s,s+\varepsilon).$$
Then $\phi:[0,+\infty) \rightarrow [0,+\infty)$ defined by $\phi(0)=0$ and
$$\phi(t)=\sup_{t\in [0,\eta]}\varphi(t)$$
is monotone non - decreasing comparison function such that $\DS{\lim_{n\rightarrow+\infty}\varphi^n(r) = 0}$, which satisfies $$\varphi(t)\le\phi(t),$$ for any $t> 0$.\end{Lemma}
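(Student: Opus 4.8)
The plan is to verify the three asserted properties of $\phi$ in turn: that $\phi$ is monotone non-decreasing, that it is a comparison function (i.e. $\phi(0)=0$ and $\phi(t)<t$ for $t>0$), that $\phi$ dominates $\varphi$, and finally that $\lim_{n\to\infty}\phi^n(r)=0$ (note the statement writes $\varphi^n$, but since $\varphi\le\phi$ and $\phi$ is non-decreasing the two limit statements are equivalent, so it suffices to prove it for $\phi$). The monotonicity of $\phi$ and the domination $\varphi(t)\le\phi(t)$ are immediate from the definition $\phi(t)=\sup_{s\in[0,t]}\varphi(s)$: a supremum over a larger interval is at least as large, and the sup over $[0,t]$ of a family including $\varphi(t)$ is $\ge\varphi(t)$. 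So the real content is the comparison inequality $\phi(t)<t$ and the iterate convergence.

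For $\phi(t)<t$, fix $t>0$ and suppose toward a contradiction that $\phi(t)\ge t$, i.e. $\sup_{s\in[0,t]}\varphi(s)\ge t$. Then there is a sequence $s_k\in[0,t]$ with $\varphi(s_k)\to L\ge t$. Passing to a subsequence, $s_k\to s_0\in[0,t]$. I would split into cases according to how $s_k$ approaches $s_0$. If infinitely many $s_k$ equal $s_0$, then $\varphi(s_0)=L\ge t\ge s_0$, contradicting $\varphi(s_0)<s_0$ unless $s_0=0$, but $\varphi(0)=0<t\le L$ is also a contradiction; so $s_0>0$ and we may assume $s_k\ne s_0$ for all $k$, hence $s_k\to s_0$ from the left or from the right (pass to a further subsequence). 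If $s_k\to s_0-$, then $\overline{\lim}_{s\to s_0-}\varphi(s)\ge L\ge t\ge s_0$, contradicting the hypothesis $\overline{\lim}_{s\to s_0-}\varphi(s)<s_0$. If $s_k\to s_0+$ (which forces $s_0<t$), then $\overline{\lim}_{s\to s_0+}\varphi(s)\ge L\ge t>s_0$, contradicting $\overline{\lim}_{s\to s_0+}\varphi(s)\le s_0$. In every case we reach a contradiction, so $\phi(t)<t$, and in particular $\phi$ is a comparison function.

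For the iterate convergence, I would invoke Proposition~\ref{P2.2}: since $\phi$ is now known to be a comparison function, it suffices to check that $\phi$ satisfies condition $\beta$) there, namely that for each $s>0$ there is $\varepsilon>0$ with $\phi(t)\le s$ for all $t\in(s,s+\varepsilon)$. Because $\phi$ is non-decreasing, $\lim_{t\to s+}\phi(t)=:\ell$ exists and equals $\inf_{t>s}\phi(t)\le s$ by the previous paragraph; if $\ell<s$ then any $\varepsilon$ small enough that $\phi(s+\varepsilon)<s$ works, so the only delicate case is $\ell=s$. Here I would relate $\lim_{t\to s+}\phi(t)$ to $\overline{\lim}_{t\to s+}\varphi(t)$: from the definition of $\phi$ as a running supremum one gets $\lim_{t\to s+}\phi(t)=\max\{\phi(s),\,\overline{\lim}_{t\to s+}\varphi(t)\}$, and since $\phi(s)<s$ this forces $\overline{\lim}_{t\to s+}\varphi(t)=s$. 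Then the structural hypothesis on $\varphi$ applies: there is $\varepsilon>0$ with $\varphi(t)=s$ for all $t\in(s,s+\varepsilon)$. Shrinking $\varepsilon$ so that also $\phi(s)\le s$ is automatic and $\sup_{u\in(s,t]}\varphi(u)=s$ for $t\in(s,s+\varepsilon)$, we get $\phi(t)=\max\{\phi(s),s\}=s$ for such $t$, which is exactly condition $\beta$). Hence $\lim_{n}\phi^n(s)=0$ by Proposition~\ref{P2.2}, and therefore $\lim_n\varphi^n(s)=0$ as well.

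The step I expect to be the main obstacle is the identity $\lim_{t\to s+}\phi(t)=\max\{\phi(s),\overline{\lim}_{t\to s+}\varphi(t)\}$ and the subsequent bookkeeping with $\varepsilon$: one must be careful that $\phi(t)$, being a supremum over the whole initial segment $[0,t]$ rather than just a neighborhood of $s$, is not inflated by values of $\varphi$ far below $s$ — this is controlled precisely by $\phi(s)<s$, which caps that contribution strictly below $s$, leaving the right-hand upper limit of $\varphi$ as the only thing that can push $\phi(t)$ up to $s$. I would also double-check the left-hand condition $\overline{\lim}_{t\to r-}\varphi(t)<r$ is genuinely needed — it is, in the proof that $\phi(t)<t$, to exclude approach of the maximizing sequence from the left.
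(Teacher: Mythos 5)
The paper states this lemma without any proof, so there is nothing to compare your argument against; on its own merits your proof is correct and complete. Your reading of the (garbled) definition as the running supremum $\phi(t)=\sup_{s\in[0,t]}\varphi(s)$ is the only one under which the statement makes sense and is clearly what the authors intend, given how the lemma is used in Lemma~\ref{L4.2} and Theorem~\ref{T4.1}. The three-way case split (stationary subsequence, approach from the left, approach from the right) correctly isolates where each hypothesis on $\varphi$ is used, and the identity $\lim_{u\to s+}\phi(u)=\max\bigl\{\phi(s),\ \overline{\lim}_{v\to s+}\varphi(v)\bigr\}$ — which you rightly flag as the delicate step — is valid precisely because $\phi(u)=\max\bigl\{\phi(s),\ \sup_{v\in(s,u]}\varphi(v)\bigr\}$ and the inner supremum decreases to the right upper limit as $u\downarrow s$; the bound $\phi(s)<s$ then does exactly the capping job you describe.

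Two small points worth tightening. First, in the subcase $s_k\to s_0^+$ you invoke $\overline{\lim}_{v\to s_0+}\varphi(v)\le s_0$, which the hypothesis only provides for $s_0>0$; the case $s_0=0$ is not covered by the hypothesis but is vacuous, since $\varphi(s_k)<s_k\to 0$ forces $L=0$, contradicting $L\ge t>0$ — a one-line remark closes it. Second, when you conclude via Proposition~\ref{P2.2}, note that its stated proof of case $\beta$) is itself only a sketch; your argument in fact shows the stronger fact that $\phi(u)\le s$ for $u$ slightly to the right of $s$ \emph{and} $\phi$ is non-decreasing with $\phi(s)<s$, from which $\lim_n\phi^n(s)=0$ also follows by the standard monotone-sequence argument (as in the proof of case $\alpha$)), so you are not dependent on the weakest link of that proposition.
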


\section{The $d-CS$ spaces}
Here we define $d-CS$ spaces and give some properties of those spaces.
\begin{Definition}\label{D3.1} Let $X$ be arbitrary set, $d: X\times X\rightarrow [0,+\infty)$ and $\tau_{d}$ topology on $X$  defining by the family of closed sets as follows: a set $A\subseteq X$ is closed if and only if for each $x\in X$, $d(A,x)=0$ implies $x\in A$, where
$$d(A,x)=\inf\{d(a,x):a\in A\}.$$
Then, ordered triplet $(X,d,\tau_{d})$ is $d-C$ space.
\end{Definition}

Let $(X,d,\tau_{d})$ be $d-C$ space and $x\in X$. By $B(x,r)=\{y\in X:~d(y,x)<r\}$, we denote ball with center $x$ and radius $r$. Such ball
need not be open set. 
\begin{Example}Let $X=[0, 1]$ and $d:X\times X\rightarrow[0, +\infty)$ defined by $d(x, y)=1-|x-y|$, for all $x, y\in X.$ Then $A=\{1\}$ is not closed set because, 
$$D(A, 0)=0 \mbox{ and } 0\notin A.$$ Also, $B(1, 1)=\{y\in X : 1-|1-y|<1\}=[0, 1)$ is not open set.
\end{Example}
\begin{Proposition}\label{P3.1} If $(X,d,\tau_{d})$ is a $d-C$ space, then the family $\{B(x,r):r>0\}$ forms a local basis at $x$. Also, if $d(x_n,x)\rightarrow 0$ then $x_n\rightarrow x$ in the topology $\tau_d$.
\end{Proposition}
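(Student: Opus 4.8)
The plan is to reduce everything to one key fact about the topology $\tau_d$: \emph{if $U\in\tau_d$ and $x\in U$, then there is $r>0$ with $B(x,r)\subseteq U$.} To prove this, I would pass to the complement $A:=X\setminus U$, which is $\tau_d$-closed by the very definition of $\tau_d$, and observe that $x\notin A$. The defining property of a closed set, ``$d(A,z)=0$ implies $z\in A$'', read contrapositively at $z=x$, immediately gives $d(A,x)\neq 0$, hence $d(A,x)>0$. If $A=\emptyset$ (i.e. $U=X$) the claim is trivial with any $r>0$; otherwise $d(A,x)=\inf\{d(a,x):a\in A\}$ is a positive real number, and I set $r:=d(A,x)$. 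A one-line verification finishes it: if $y\in B(x,r)$ but $y\in A$, then $d(y,x)\ge\inf_{a\in A}d(a,x)=r$, contradicting $d(y,x)<r$; so $B(x,r)\subseteq X\setminus A=U$.

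Granting this, both assertions of the proposition follow directly. For the local-basis statement, the key fact says precisely that every $\tau_d$-open set containing $x$ contains some member of $\{B(x,r):r>0\}$, which is what is required. For the convergence statement, suppose $d(x_n,x)\to 0$ and let $U\in\tau_d$ with $x\in U$; choose $r>0$ with $B(x,r)\subseteq U$, and pick $N$ so that $d(x_n,x)<r$ for all $n\ge N$. Then $x_n\in B(x,r)\subseteq U$ for all $n\ge N$, i.e. $x_n\to x$ in $\tau_d$.

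I do not expect a genuine obstacle here; the proof is essentially bookkeeping, and the only point requiring care is that $\tau_d$ is specified through its \emph{closed} sets rather than its open sets, so the positivity of $d(A,x)$ must be extracted from the closedness of $A=X\setminus U$ together with $x\notin A$, rather than from any separation-type axiom. I would also explicitly note the degenerate cases ($A=\emptyset$, equivalently $U=X$, and the reading $d(\emptyset,x)=+\infty$) and flag that the balls $B(x,r)$ themselves need not be $\tau_d$-open (as the example just before this proposition shows), so ``local basis'' is meant in the sense that each open set about $x$ absorbs some ball; no openness of $B(x,r)$ is asserted or used.
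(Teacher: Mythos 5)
Your proposal is correct and follows essentially the same route as the paper: pass to the complement $A=X\setminus U$, use closedness and $x\notin A$ to get $d(A,x)>0$, take that as the radius, and deduce both the local-basis and convergence claims. Your write-up is merely more explicit about the degenerate case $U=X$ and about verifying $B(x,r)\subseteq U$, which the paper leaves implicit.
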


\begin{proof}Let $U\ni x$ be an open set. Then $X\setminus U$ is closed and, since $x\notin X\setminus U$, we
have $d(X\setminus U,x)=\eta>0$. Hence $B(x,\eta)\subseteq U$.

Assume that $d(x_n,x)\rightarrow0$. If $U$ is an open set that contains $x$, then $U\supseteq B(x,\eta)$ for
some $\eta>0$. The last set contains almost all members of the sequence $x_n$.\end{proof}

The convergence of a sequence $(x_n)$ in the topology $\tau_{d}$
need not imply $d(x_n,x)\to 0$.

The fact that $\lim d(x_n,x)=0$ will be denoted by $\lim^d x_n=x$.

\begin{Definition}\label{D3.2} Let $(X,d,\tau_{d})$ be a $d-C$ space. A function $f:X\rightarrow X$ is said to be left sequentially $d$-continuous
 if from $\lim d(x_n,x)=0$ follows $\lim d(f(x_n),f(x))=0$, for any $x\in X$ and each $(x_n)\subseteq X$. \end{Definition}

\begin{Definition}\label{D3.3}
Let $(X,d,\tau_{d})$ be a $d-C$ space. We define the following two properties:

\item{\rm(W3)} $\lim d(x_n,x)=0$ and $\lim d(x_n,y)=0$ implies  $x=y$;

\item{\rm(JMS)} $\lim d(x_n,z_n)=0$ and $\lim d(y_n,z_n)=0$ implies $\overline{\lim} d(x_n,y_n)\ne +\infty$.
\end{Definition}

The properties (W3) was introduced by M.\ Fr\'{e}chet \cite{MF1} and (JMS) by J. Jachy\-mski, J.~Matkowski and T.~Swiatkowski \cite{JMS}. 

Next statement gives the characterization of symmetric space which satisfies the property (JMS). It generalize famous result of J.~Jachymski, J.~Matkowski and T.~Swiatkowski \cite{JMS} obtained for symmetric spaces.

\begin{Theorem}\label{T3.1}
Let $(X,d)$ be a $d-C$ space. Then the following
conditions are equivalent:\\
(i) $(X,d)$ satisfies property (JMS);\\
(ii) There exists $\delta,\eta>0$ such that for any $x,y,z\in X$,
$$d(x,z)+d(y,z)<\delta \mbox{ implies that }d(x,y)<\eta.$$
(iii) There exists $r>0$ such that $$R=\sup{\{\di (B(x,r)):x\in
X\}}<+\infty.$$
\end{Theorem}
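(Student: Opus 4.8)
The natural strategy is to prove the cyclic chain of implications $(i)\Rightarrow(ii)\Rightarrow(iii)\Rightarrow(i)$, mimicking the classical Jachymski–Matkowski–Świątkowski argument but being careful that in a $d-C$ space we only have the topological structure of Definition 3.1, together with Proposition 3.1 (the balls $B(x,r)$ form a local base and $d$-convergence implies $\tau_d$-convergence).

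First I would prove $(i)\Rightarrow(ii)$ by contraposition. Assume (ii) fails; then for every $n$ (taking $\delta=\eta=1/n$, or more carefully taking a sequence $\delta_n\to 0$ while $\eta$ is forced to blow up) there exist $x_n,y_n,z_n\in X$ with $d(x_n,z_n)+d(y_n,z_n)<1/n$ but $d(x_n,y_n)\ge n$. Then $d(x_n,z_n)\to 0$ and $d(y_n,z_n)\to 0$, while $\overline{\lim}\,d(x_n,y_n)=+\infty$, contradicting (JMS). The only subtlety is to arrange the negation of (ii) correctly: the failure of ``there exist $\delta,\eta$'' means that for every pair $(\delta,\eta)$ the implication fails, so in particular for each $n$ with $\delta=1/n$ and $\eta=n$ we get a violating triple; this is routine.

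Next, $(ii)\Rightarrow(iii)$ is essentially a direct computation. Fix $\delta,\eta$ as in (ii) and set $r=\delta/2$. If $x,y\in B(z,r)$ then $d(x,z)+d(y,z)<\delta$, so $d(x,y)<\eta$; hence $\di(B(z,r))\le\eta$ for every $z\in X$, giving $R\le\eta<+\infty$. Then $(iii)\Rightarrow(i)$: suppose $\di(B(x,r))\le R<\infty$ for all $x$, and let $d(x_n,z_n)\to 0$, $d(y_n,z_n)\to 0$. For $n$ large, $d(x_n,z_n)<r$ and $d(y_n,z_n)<r$, so both $x_n$ and $y_n$ lie in $B(z_n,r)$, whence $d(x_n,y_n)\le\di(B(z_n,r))\le R$ for all large $n$; therefore $\overline{\lim}\,d(x_n,y_n)\le R<+\infty$, which is (JMS).

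The main obstacle, such as it is, is bookkeeping rather than depth: one must be scrupulous about the direction of the definition of $\di$ (it is a supremum over an index set that may be a single point or empty — note $\di(B(x,r))$ could be $0$ if the ball is a singleton, which causes no harm) and about the fact that (JMS) is a statement about the extended limit superior not equalling $+\infty$, so one needs the uniform bound $R$ to hold for all sufficiently large $n$, not merely eventually along a subsequence. I would also remark that, unlike in the metric or symmetric setting, no symmetry or triangle inequality of $d$ is used anywhere, so the theorem is genuinely a statement at the level of $d-C$ spaces; this is worth a sentence in the write-up to justify the claim that it generalizes the Jachymski–Matkowski–Świątkowski result.
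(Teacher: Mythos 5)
Your proposal is correct and follows essentially the same route as the paper: both establish a three-way cycle of elementary implications, using contraposition to pass from (JMS) to the quantitative condition (producing violating triples with $d(x_n,z_n)+d(y_n,z_n)\to 0$ but $d(x_n,y_n)$ unbounded), the $r$--$\delta$ rescaling to relate (ii) and (iii), and the ``eventually both points lie in a small ball'' argument to recover (JMS). The only difference is the order of the cycle ($(i)\Rightarrow(ii)\Rightarrow(iii)\Rightarrow(i)$ versus the paper's $(i)\Rightarrow(iii)\Rightarrow(ii)\Rightarrow(i)$), which is immaterial.
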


\begin{proof} $(i)\Rightarrow (iii)$. Suppose that  $\sup{\{\di (B(x,r)):x\in
X\}}=+\infty$ for any $r>0$ and $\lim^dx_n=z$. Then for each $n$ there exists
$y_n$ such that $d(y_n,z)<d(x_n,z)$ and $d(x_n,y_n)>n$. So we obtain that $\neg (iii)\Rightarrow \neg (i)$.

\noindent $(iii)\Rightarrow (ii)$. Put $\DS{ \delta = \frac r2}$ and $\eta = R$.

\noindent $(ii)\Rightarrow (i)$. From $\lim d(x_n,z_n)=0$ and $\lim d(y_n,z_n)=0$ follows
that there exists natural number $N$ such that $n>N$ implies 
$\DS{d(x_n,z_n)<\frac {\delta}2}$ and $\DS{d(y_n,z_n)<\frac {\delta}2}$. Hence,
$\DS{d(x_n,y_n)<\eta}$ for any $n>N$. 
\end{proof}
\begin{Remark} (ii) is equivalent with: 
\begin{equation}
d(x,z)+d(y,z)<\delta \mbox{ implies } \max\{d(x,y),d(y,x)\}<\eta.
\end{equation}
\end{Remark}
\begin{Definition}\label{D3.4} 
Let $(X,d,\tau_{d})$ be a $d-C$ space. A sequence $( x_{n})\subseteq X$ \ is said to be left Cauchy sequence if for given $\varepsilon >0$ there is
$N\in \mathbb{N}$ such that $d(x_{n},x_{m})<\varepsilon $, for all
$m>n\geq N.$ If $(X,d,\tau_{d})$ is $d-C$ space, then it is $d-CS$ space if and only if every
left Cauchy sequence converges to some $x\in X$. \end{Definition}

\begin{Lemma}\label{L3.1} Let $(X,d,\tau_{d})$ be a $d-CS$ space, which satisfies the properties (W3) and (JMS)$, f,g:X\rightarrow X$ be two
left sequentially $d$-continuous mappings
and  $d_*:X\times X\rightarrow [0,+\infty)$ be defined by : $ d_*(x, y) = 0 \mbox{ for }x=y$
and  
\begin{equation}
d_*(x,y)=\max\{d(x,y),\rho(f(x),g(y)),\ldots,\rho(f^n(x),g^n(y)\},
\end{equation}
otherwise. Then space $(X,d_*,\tau_{d_*})$  is left complete $d-C$ space.
\end{Lemma}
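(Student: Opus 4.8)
The statement splits into a trivial part and a substantive one. By Definition~\ref{D3.1}, \emph{every} function $d_*\colon X\times X\to[0,+\infty)$ makes $(X,d_*,\tau_{d_*})$ a $d-C$ space, so there is nothing to verify for the ``$d-C$ space'' clause; the whole content is that $(X,d_*,\tau_{d_*})$ is left complete, i.e.\ every left $d_*$-Cauchy sequence converges in $\tau_{d_*}$. My plan is to push such a sequence down to $(X,d)$, invoke left completeness of the $d-CS$ space $(X,d,\tau_d)$ to obtain a $d$-limit, and then lift the convergence back up to $d_*$ using left sequential $d$-continuity of $f$ and $g$.

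Concretely, let $(x_n)$ be a left $d_*$-Cauchy sequence; if it is eventually constant it converges to its eventual value in any topology, so assume not. Since $d(x,y)$ is the first entry of the maximum defining $d_*$, we have $d_*(x,y)\ge d(x,y)$ whenever $x\ne y$, and since the left-Cauchy condition only constrains pairs with distinct indices, $(x_n)$ is a left $d$-Cauchy sequence (the only uncovered pairs are repeated-term pairs, which are harmless). By Definition~\ref{D3.4} applied to the $d-CS$ space $(X,d,\tau_d)$ there is $x\in X$ with $\lim d(x_n,x)=0$, that is $\lim^{d}x_n=x$.

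It then remains to prove $\lim^{d_*}x_n=x$. For $x_n\ne x$ one has $d_*(x_n,x)=\max\{d(x_n,x),\rho(f(x_n),g(x)),\dots,\rho(f^{n}(x_n),g^{n}(x))\}$, a maximum of finitely many terms, so it suffices to drive each term to $0$. Left sequential $d$-continuity of $f$ applied to $\lim^{d}x_n=x$ gives $\lim^{d}f(x_n)=f(x)$, and by induction $\lim^{d}f^{k}(x_n)=f^{k}(x)$ for every $k$, and likewise for $g$; hence each term $\rho(f^{k}(x_n),g^{k}(x))$ tends to $0$ once the two ``target'' points are identified, and the maximum over finitely many indices tends to $0$ with them. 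The $d_*$-analogue of Proposition~\ref{P3.1} then converts $\lim^{d_*}x_n=x$ into convergence of $(x_n)$ to $x$ in $\tau_{d_*}$, which is precisely what left completeness requires.

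The delicate point, and where I expect the real work, is this last lifting step, because $d-C$ spaces carry no triangle inequality: one cannot simply chain $d(f^{k}(x_n),f^{k}(x))\to0$ with a comparison of $f^{k}(x)$ and $g^{k}(x)$ to conclude $d(f^{k}(x_n),g^{k}(x))\to0$. This is exactly where (W3) and (JMS) should enter — (W3) to force uniqueness of $d$-limits and thus pin down the common target $f^{k}(x)=g^{k}(x)$ (equivalently $d(f^{k}(x),g^{k}(x))=0$), and (JMS), via the bounded-ball-diameter characterization of Theorem~\ref{T3.1}, to keep $d_*$ finite-valued and to control the interaction of the finitely many $d$-Cauchy tails. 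Before filling in details I would fix the reading of the displayed formula — whether $\rho$ stands for $d$ itself or for the symmetrization $\rho(a,b)=\max\{d(a,b),d(b,a)\}$, and whether $n$ is a fixed integer or the maximum is secretly a supremum over all iterates — since this choice governs how heavily (JMS) is used and whether $g$ must be tied to $f$ (e.g.\ $g=f$, as in the intended application) in order for the target points to coincide.
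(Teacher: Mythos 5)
Your reduction is exactly the paper's: the ``$d-C$ space'' clause is vacuous by Definition~\ref{D3.1}, the properties (W3) and (JMS) transfer trivially, and since $d(x,y)\le d_*(x,y)$ every left $d_*$-Cauchy sequence is left $d$-Cauchy, hence $d$-convergent by completeness of $(X,d,\tau_{d})$. The paper's proof stops at precisely that point and declares $(X,d_*,\tau_{d_*})$ left complete; it never performs, or even mentions, the lifting step that occupies the second half of your proposal (indeed it never uses the continuity hypothesis on $f$ and $g$ at all). So on the portion of the argument the paper actually writes down, you and the paper coincide.

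Your worry about upgrading $\lim^{d}x_j=x$ to $\lim^{d_*}x_j=x$ is legitimate — $\tau_{d_*}$ is finer than $\tau_d$, and $d_*$-convergence is what is actually needed when Lemma~\ref{L3.1} is fed into Theorem~\ref{T4.1} inside the proof of Theorem~\ref{T4.3} — so you have put your finger on a real gap in the published proof rather than invented one. However, your proposed repair aims at the wrong difficulty. In the intended application one has $g=f$ and $\rho=d$, so the terms to control are $d\bigl(f^{k}(x_j),f^{k}(x)\bigr)$ for $k=1,\dots,n$, and these tend to $0$ by $k$-fold application of left sequential $d$-continuity; no appeal to (W3) or (JMS) is required and there are no distinct ``target points'' to identify. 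For genuinely unrelated $f$ and $g$ the displayed $d_*$ does not behave: nothing forces $d\bigl(f^{k}(x_j),g^{k}(x)\bigr)\to 0$, and (W3) cannot rescue this because $g^{k}(x_m)$ is a moving sequence rather than a fixed second limit of $\bigl(f^{k}(x_j)\bigr)$, so the identification $f^{k}(x)=g^{k}(x)$ you hope for simply need not hold. The correct resolution is to read the lemma with $g=f$ (or with a hypothesis tying $g$ to $f$), after which your outline closes into a complete proof that is strictly more careful than the one in the paper.
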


\begin{proof} The space $(X,d_*,\tau_{d_*})$ is $d-C$ metric space, which satisfies the properties (W3) and (JMS), because conditions of Definitions \ref{D3.1} and \ref{D3.3} are trivial satisfied.
Also, we have $d(x,y) \le d_*(x, y) $ for any $x,y\in X$.  Further, if $(x_j)\subseteq X$ an arbitrary left Cauchy sequence in $(X,d_*,\tau_{d_*})$, then $(x_j)$ is a left Cauchy sequence in $(X,d,\tau_{d})$, which implies that
 $(X,d_*,\tau_{d_*})$ is  $d-CS$ space, because  $(X,d,\tau_{d})$  is $d-CS$ space.  
\end{proof}

\section{Main Results}

By $\Phi$ we denote set of all comparison functions  $\varphi:[0,+\infty) \rightarrow [0,+\infty)$ which satisfies  
$$\overline{\lim_{t\rightarrow r-}}\varphi(t)< r  \mbox{ and } \overline{\lim_{t\rightarrow r+}}\varphi(t)\le r$$ for each $r>0$, such that
for every $s>0$ which satisfies
$$\overline{\lim}_{t\rightarrow s+}\varphi(t)=s,$$ there exists $\varepsilon > 0$ such that 
$$\varphi (t) = s\enskip \text{ for any } t\in (s,s+\varepsilon).$$
\begin{Example}Let $\varphi:[0,+\infty) \rightarrow [0,+\infty)$ defined by
$$\varphi(t)=\left\{\begin{array}{cc}
                      \frac{t}{2}, & t\in[0, \frac{1}{2}], \\
                      \frac{1}{2}, & t>\frac{1}{2},
                    \end{array}\right.$$
then $\varphi\in \Phi$.                    
\end{Example}
\begin{Lemma}\label{L4.1} Let $\varphi_1,\ldots,\varphi_n \in\Phi$. Then there exists monotone non-decreasing function
$\varphi\in\Phi$ such that $$\varphi_k (x)\le\varphi (x),$$ for
each $1\le k\le n$ and $x\ge 0$. 
\end{Lemma}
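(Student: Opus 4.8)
The plan is to build $\varphi$ by taking the pointwise maximum of the $\varphi_k$ and then "monotonizing" it, exactly as in Lemma \ref{L2.2}. First I would set $\psi(t)=\max_{1\le k\le n}\varphi_k(t)$ and $\varphi(t)=\sup_{0\le u\le t}\psi(u)$ (with $\varphi(0)=0$). By construction $\varphi$ is monotone non-decreasing and $\varphi_k(x)\le\psi(x)\le\varphi(x)$ for every $k$ and every $x\ge 0$, so the dominating inequality in the statement is immediate. What remains is the real content: verifying $\varphi\in\Phi$, i.e.\ that $\varphi$ is a comparison function ($\varphi(r)<r$ for $r>0$) satisfying the two $\overline{\lim}$ conditions and the "flatness on a right interval" property whenever $\overline{\lim}_{t\to s+}\varphi(t)=s$.

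Next I would record the elementary closure facts about $\Phi$ under finite maxima. For a finite family the limit superior of a maximum is the maximum of the limit superiors, so $\overline{\lim}_{t\to r-}\psi(t)=\max_k\overline{\lim}_{t\to r-}\varphi_k(t)<r$ and likewise $\overline{\lim}_{t\to r+}\psi(t)\le r$; and $\psi(r)=\max_k\varphi_k(r)<r$ for $r>0$. Thus $\psi$ already satisfies the first three requirements of membership in $\Phi$; the flatness property for $\psi$ also follows, since if $\overline{\lim}_{t\to s+}\psi(t)=s$ then this equals $\varphi_{k_0}(t)$-behaviour for some index achieving the max on a right neighbourhood, and one uses the flatness of that $\varphi_{k_0}$ together with $\varphi_j(t)\le t$ near $s$ to conclude $\psi\equiv s$ on a (possibly smaller) right interval. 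So the problem reduces to: if $\psi\in\Phi$, then its monotone envelope $\varphi(t)=\sup_{0\le u\le t}\psi(u)$ again lies in $\Phi$. This is precisely the assertion underlying Lemma \ref{L2.2}, so I would cite that lemma (applied to $\psi$ in place of $\varphi$) to obtain that $\varphi$ is a monotone non-decreasing comparison function with $\lim_n\varphi^n(r)=0$ and $\psi(t)\le\varphi(t)$; it then remains only to check that $\varphi$ still satisfies the left/right $\overline{\lim}$ bounds and the flatness clause, i.e.\ that $\varphi\in\Phi$ and not merely that it is a monotone comparison function dominating $\psi$.

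The main obstacle is this last verification for the monotone envelope $\varphi$. The right-hand bound $\overline{\lim}_{t\to r+}\varphi(t)\le r$ needs care: since $\varphi$ is non-decreasing, $\overline{\lim}_{t\to r+}\varphi(t)=\inf_{t>r}\varphi(t)=\lim_{t\to r+}\varphi(t)$, and one must show this common value is $\le r$, which is not automatic from $\psi$'s bounds alone — one argues that $\varphi(t)=\sup_{0\le u\le t}\psi(u)$, and for $t$ slightly bigger than $r$ the supremum is controlled either by $\psi$ near $r$ (bounded by $\overline{\lim}_{u\to r+}\psi(u)\le r$) or by $\sup_{0\le u\le r}\psi(u)=\varphi(r)$, and one needs $\varphi(r)\le r$, which is the comparison-function property already granted by Lemma \ref{L2.2}. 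The left-hand bound $\overline{\lim}_{t\to r-}\varphi(t)<r$ is similarly reduced to showing the sup over $[0,r)$ of $\psi$ stays strictly below $r$, using $\overline{\lim}_{u\to r-}\psi(u)<r$ and compactness-type estimates on $[0,r-\epsilon]$. Finally, if $\overline{\lim}_{t\to s+}\varphi(t)=s$, then since $\varphi$ dominates $\psi$ and both are pinned near $s$, one traces the equality back to the flatness of $\psi$ (hence of one $\varphi_{k_0}$) on a right interval of $s$, and concludes $\varphi\equiv s$ there as well. I would present these three checks tersely, leaning on Lemma \ref{L2.2} for everything about monotonicity and the vanishing of iterates, so that the only genuinely new work is the bookkeeping that the $\Phi$-membership conditions survive both the finite max and the monotone-envelope operations.
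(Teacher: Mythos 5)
Your proposal is correct in substance but goes further than the paper does. The paper's proof consists of a single line: set $\varphi(x)=\max\{\varphi_1(x),\ldots,\varphi_n(x)\}$ and assert that $\varphi(0)=0$, $\varphi_k(t)\le\varphi(t)<t$ and $\overline{\lim}_{t\to x+}\varphi(t)<x$. It never addresses the monotonicity claimed in the statement (a pointwise maximum of members of $\Phi$ need not be non-decreasing, since membership in $\Phi$ does not require monotonicity), and the asserted strict inequality $\overline{\lim}_{t\to x+}\varphi(t)<x$ is actually stronger than what the definition of $\Phi$ permits (elements of $\Phi$ may have $\overline{\lim}_{t\to s+}\varphi(t)=s$, compensated by the flatness clause). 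Your two-stage construction --- finite maximum $\psi$ followed by the monotone envelope $\sup_{0\le u\le t}\psi(u)$, with Lemma \ref{L2.2} supplying the monotone-comparison-function part and explicit checks that the left/right $\overline{\lim}$ bounds and the flatness clause survive the envelope --- is what is actually needed to deliver a \emph{monotone non-decreasing} member of $\Phi$ dominating all the $\varphi_k$, and your verifications of the three $\Phi$-conditions for the envelope are sound.

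One step in your sketch needs repair: in proving the flatness clause for $\psi=\max_k\varphi_k$, the bound ``$\varphi_j(t)\le t$ near $s$'' is not enough, since for $t>s$ it only gives $\varphi_j(t)<t$, which may exceed $s$, and there need not be a single index realizing the maximum on an entire right neighbourhood. The correct argument is a dichotomy: since $\overline{\lim}_{t\to s+}\psi(t)=\max_k\overline{\lim}_{t\to s+}\varphi_k(t)=s$, some $\varphi_{k_0}$ attains $\overline{\lim}_{t\to s+}\varphi_{k_0}(t)=s$ and is therefore identically $s$ on a right interval of $s$; for every other $j$, either $\overline{\lim}_{t\to s+}\varphi_j(t)<s$, so $\varphi_j<s$ on a right interval, or it equals $s$, so $\varphi_j\equiv s$ on a right interval by its own flatness clause. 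Intersecting these finitely many intervals gives $\psi\equiv s$ there. With that correction your argument is complete, and it is more careful than the one in the paper.
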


\begin{proof} Let $$\varphi (x) = \max\{\varphi_1(x),\ldots,\varphi_n(x)\}.$$
Then it is easy to show that: $\varphi (0)=0$, $\varphi_k(t)\le\varphi (t) < t$ ($1\le k\le n$) for
all $t>0$ and $$\overline{\lim_{t \rightarrow x+}} \varphi(t)< x.$$
\end{proof}

\begin{Lemma}\label{L4.2} Let $(X,d,\tau_{d})$ be a $d-CS$ space which
satisfies the properties (W3) and (JMS), let $f:X\rightarrow X$, let $\varphi\in \Phi$ and let $\delta,\eta$ be defined as in $(ii)$ of Theorem \ref{T3.1}. If
$$d(f(x),f(y))\le \varphi (d(x,y))$$
for any $x,y\in X$ and $$\sup_{t\in [0,\eta]}\varphi(t)\le\frac{\delta}2,$$ then $f$ has a unique fixed point $y\in X$ and for each
$x\in X$ the sequence of Picard iterates defined by $f$ at $x$ converges, in the topology $\tau_d$, to
$y$.\end{Lemma}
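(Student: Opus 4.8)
\section*{Proof proposal}

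The plan is to imitate the Picard‑iteration proof of Matkowski's theorem, using the ``hub'' estimate of Theorem~\ref{T3.1}(ii) and the scale‑matching hypothesis $\sup_{t\in[0,\eta]}\varphi(t)\le\delta/2$ to replace the missing triangle inequality. Fix $x_{0}\in X$, put $x_{n}=f^{n}(x_{0})$, $s_{n}=d(x_{n},x_{n+1})$ and $r_{n}=d(x_{n+1},x_{n})$. From the contraction inequality one gets $s_{n}\le\varphi(s_{n-1})$ and $r_{n}\le\varphi(r_{n-1})$, so both sequences are non‑increasing and convergent; if, say, $s_{n}\downarrow s>0$, then $s_{n}>s$ for all $n$ and $s_{n}\in(s,s+\varepsilon)$ eventually, so (exactly as in the proof of Proposition~\ref{P2.1}, using $\varphi\in\Phi$) either $\overline{\lim}_{t\to s+}\varphi(t)<s$, forcing $s_{n+1}<s$, or $\varphi\equiv s$ on some $(s,s+\varepsilon)$, forcing $s_{n+1}\le s$; either way a contradiction. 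Hence $s_{n}\to0$ and $r_{n}\to0$. (Here I also use that $d(x,x)\le\varphi(d(x,x))$ forces $d(x,x)=0$ for every $x$.)

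The heart of the argument, and the step I expect to be hardest, is showing that $(x_{n})$ is left Cauchy. Let $\delta,\eta$ be as in Theorem~\ref{T3.1}(ii); by the Remark following it, $d(p,z)+d(q,z)<\delta$ implies $\max\{d(p,q),d(q,p)\}<\eta$, and note $d(u,v)\le\eta$ implies $d(f(u),f(v))\le\varphi(d(u,v))\le\sup_{[0,\eta]}\varphi\le\delta/2$. Choose $N$ with $s_{n},r_{n}<\delta/4$ for all $n\ge N$. First I would prove by induction on $k$ that $\max\{d(x_{N},x_{N+k}),d(x_{N+k},x_{N})\}\le\eta$ for every $k\ge0$: the case $k=0$ is trivial, and in the step one uses $d(x_{N+k},x_{N+1})=d(f(x_{N+k-1}),f(x_{N}))\le\delta/2$ (legitimate since $d(x_{N+k-1},x_{N})\le\eta$ by the inductive hypothesis) together with the hub estimate centred at $z=x_{N+1}$, because $d(x_{N},x_{N+1})+d(x_{N+k},x_{N+1})<\delta/4+\delta/2<\delta$. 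The crucial point is that the hub is the \emph{fixed} point $x_{N+1}$, so the recursion in $k$ closes without the base index drifting backwards. Then, writing $\psi(t)=\sup_{s\in[0,t]}\varphi(s)$, which by Lemma~\ref{L2.2} is a monotone non‑decreasing comparison function with $\psi^{m}(\eta)\to0$, one has $d(f^{j}(u),f^{j}(v))\le\psi^{j}(d(u,v))$ for all $j$ (iterate the contraction, using $\psi$ non‑decreasing), so for $n\ge N$ and $k\ge0$,
$d(x_{n},x_{n+k})=d\bigl(f^{\,n-N}(x_{N}),f^{\,n-N}(x_{N+k})\bigr)\le\psi^{\,n-N}(\eta)$.
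Since $\psi^{\,n-N}(\eta)\to0$, this gives $\sup_{m>n}d(x_{n},x_{m})\to0$ as $n\to\infty$, i.e.\ $(x_{n})$ is left Cauchy.

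Since $(X,d,\tau_{d})$ is a $d-CS$ space, the left Cauchy sequence $(x_{n})$ satisfies $d(x_{n},y)\to0$ for some $y\in X$. Then $d(x_{n+1},f(y))=d(f(x_{n}),f(y))\le\varphi(d(x_{n},y))\to0$, because $\varphi(0)=0$ and $0\le\varphi(t)<t$ for $t>0$; thus $d(x_{n+1},y)\to0$ and $d(x_{n+1},f(y))\to0$, and property (W3) yields $f(y)=y$. For uniqueness, if $y'=f(y')$ then $d(y,y')\le\varphi(d(y,y'))$ forces $d(y,y')=0$, and since $d(y,y)=0$, (W3) applied to the constant sequence gives $y=y'$. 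Finally, for an arbitrary starting point the same reasoning shows its orbit $d$‑converges to a fixed point of $f$, necessarily $y$, and by Proposition~\ref{P3.1} the convergence $d(x_{n},y)\to0$ implies $x_{n}\to y$ in the topology $\tau_{d}$.

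In summary, the only delicate point is the left‑Cauchy property: because $d$ is neither symmetric nor subadditive, estimates cannot be chained freely, and everything rests on (i) choosing the single fixed hub $z=x_{N+1}$ so that the induction on the gap $k$ does not push the base index back, and (ii) the exact numerology $s_{N}<\delta/4$, $\sup_{[0,\eta]}\varphi\le\delta/2$, $\delta/4+\delta/2<\delta$. Once the sequence is left Cauchy, passing to the fixed point is routine given (W3); property (JMS) enters only through its equivalent form Theorem~\ref{T3.1}(ii).
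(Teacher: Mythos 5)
Your proposal is correct and follows essentially the same route as the paper's proof: consecutive distances tend to $0$, an induction on the gap with the fixed hub $x_{N+1}$ (the paper uses $f^{k+1}(x)$ and the threshold $\min\{\delta/2,\eta\}$) keeps all gaps below $\eta$, the monotone majorant $\psi$ from Lemma \ref{L2.2} then yields the left Cauchy property, and completeness, $d$-continuity of $f$ and (W3) finish the argument; your explicit tracking of $\max\{d(x_N,x_{N+k}),d(x_{N+k},x_N)\}$ is in fact a welcome refinement, since $d$ need not be symmetric. The only cosmetic fix: start the induction at $k=1$ with $N$ chosen so that $\max\{s_N,r_N\}<\min\{\delta/4,\eta\}$, because in a $d-C$ space $d(x_N,x_N)$ need not vanish, so the $k=0$ case is not literally trivial.
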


\begin{proof} For any $p,q\in X$ we have $$d(f(p),f(q))\le \varphi(d(p,q))\le
d(p,q),$$ which implies that $f$ is left sequentially $d$-continuous.

By Lemma \ref{L2.2} function $\phi:[0,+\infty) \rightarrow [0,+\infty)$ defined by $\phi(0)=0$ and
$$\phi(t)=\sup_{t\in [0,\eta]}\varphi(t)$$
is monotone non-decreasing comparison function such that $\DS{\lim_{n\rightarrow+\infty}\varphi^n(r) = 0}$, which satisfies $$\varphi(t)\le\phi(t),$$ for any $t> 0$ and $\varphi(t)\le\frac{\delta}2$.

Let $x\in X$. Then
$$d(f^n(x), f^{m+n}(x))\le \phi^n (d(x,f^m(x)))\mbox{ for any $m,n\in {\bf N}$}.$$
So
$$d(f^n(x),f^{n+1}(x))\le \phi^n (d(x,f(x))),$$
which implies that
$$d(f^n(x),f^{n+1}(x))\rightarrow 0.$$
Then there exists $k\in {\bf N}$ such that
$$d(f^k(x),f^{k+1}(x))\le\min\{\frac{\delta}2,\eta\}.$$

We shall prove that for all $n\in{\bf N}$,
\begin{equation}\label{eta}d(f^k(x),f^{k+n}(x))\le\eta.\end{equation}
By definition of $k$, we get that (\ref{eta}) is valid for $n=1$. Now, assume that (\ref{eta}) is satisfied
for some $n\in{\bf N}$. From
$$d(f^k(x),f^{k+1}(x))\le \frac {\delta}2$$
and
$$d(f^{k+1}(x),f^{k+n+1}(x))\le \phi(d(f^{k}(x),f^{k+n}(x))) \le\phi(\eta)\le \frac {\delta}2,$$
it follows that
$$d(f^k(x),f^{k+1}(x))+d(f^{k+1}(x),f^{k+n+1}(x))\le \delta ,$$
which by Theorem \ref{T3.1} implies that
$$d(f^{k}(x),f^{k+n+1}(x))\le\eta.$$
So, by induction we get that (\ref{eta}) is satisfied for any $n\ge 1$. Thus
$$d(f^{k+n}(x),f^{k+n+m}(x))\le \phi^n (\eta),\mbox{ for any $m,n\in {\bf N}$}.$$
Hence $(f^n(x))$ is a Cauchy sequence.

Then there exists $y\in X$ such that $\lim f^n(x)=y$. Since $f$ is left sequentially $d$-continuous, we have $\lim^d
f^{n+1}(x)=f(y)$. Now we get that $f(y)=y$ because $(X,d,\tau_{d})$ satisfies (W3).

Since $d(f^n(x),y)=0$, by Proposition \ref{P3.1} we have that $f^n(x)\rightarrow y$ in the topology $\tau_d$.

If $y^*$ is another fixed point for $f$, then for all $n$ we have
$$d(y,y^*)=d(f^n(y),f^n(y^*))\le\phi^n(d(y,y^*))\rightarrow0,\mbox{ as }n\rightarrow\infty.$$
\end{proof}

\begin{Theorem}\label{T4.1} Let $(X,d,\tau_{d})$ be a $d-CS$ space which
satisfies the properties (W3) and (JMS), $f:X\rightarrow X$ and $\varphi\in \Phi$. If
\begin{equation}
d(f(x),f(y))\le\varphi (d(x,y))
\end{equation}
for any $x,y\in X$, then $f$ has a unique fixed point $y\in X$ and for each $x\in X$ the sequence of Picard
iterates defined by $f$ at $x$ converges in $d$, and, also by Proposition \ref{P3.1}, it converges the same limit point in the topology $\tau_d$.
\end{Theorem}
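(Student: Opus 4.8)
The strategy is to deduce Theorem \ref{T4.1} from Lemma \ref{L4.2} by replacing $\varphi$ with a monotone non-decreasing majorant and then passing to a sufficiently high iterate of $f$. Since $(X,d)$ satisfies (JMS), Theorem \ref{T3.1} furnishes constants $\delta,\eta>0$ as in its statement (ii); these depend only on the fixed structure $(X,d)$. Applying Lemma \ref{L2.2} to $\varphi$ (which, being in $\Phi$, satisfies the hypotheses of that lemma) produces a monotone non-decreasing comparison function $\phi$ with $\varphi(t)\le\phi(t)$ for all $t>0$ and $\phi^n(r)\to 0$ for every $r>0$; by Proposition \ref{P2.1} we then have $\phi\in\Phi$, and, since $\phi$ is monotone non-decreasing, each iterate $\phi^m$ is again a monotone non-decreasing comparison function with $(\phi^m)^n(r)=\phi^{mn}(r)\to 0$, so $\phi^m\in\Phi$ as well.

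Because $\phi\ge\varphi$ is monotone, the hypothesis gives $d(f(x),f(y))\le\phi(d(x,y))$ for all $x,y\in X$, and a straightforward induction exploiting the monotonicity of $\phi$ yields $d(f^m(x),f^m(y))\le\phi^m(d(x,y))$ for every $m\in\mathbb{N}$. As $\phi^m(\eta)\to 0$ when $m\to\infty$, fix $m$ with $\phi^m(\eta)\le\delta/2$; by monotonicity $\sup_{t\in[0,\eta]}\phi^m(t)=\phi^m(\eta)\le\delta/2$. Now apply Lemma \ref{L4.2} to $g:=f^m$ with comparison function $\phi^m\in\Phi$ and the same $\delta,\eta$: $g$ has a unique fixed point $u_*$, and for every starting point the Picard iterates of $g$ converge to $u_*$, indeed with $d(g^k(z),u_*)\to 0$ for each $z\in X$, which is what the argument of Lemma \ref{L4.2} actually establishes.

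It remains to transfer this to $f$. By Lemma \ref{L2.1} applied with $l=m$: since $f^m$ has the unique fixed point $u_*$, so does $f$, and since every sequence of Picard iterates of $f^m$ converges in $\tau_d$ to $u_*$, every sequence of Picard iterates of $f$ converges in $\tau_d$ to $u_*$. To obtain convergence in $d$, fix $x\in X$ and write $n=mk+j$ with $0\le j\le m-1$; then $f^n(x)=g^k(f^j(x))$, so $d(f^n(x),u_*)\to 0$ along each of the finitely many residue classes $j$, whence $\lim^d f^n(x)=u_*$, and Proposition \ref{P3.1} recovers the $\tau_d$-convergence already noted.

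The main obstacle is precisely that $\varphi$ need not be monotone, so one cannot iterate the contractive inequality directly to pass from $f$ to $f^m$; interposing the monotone majorant $\phi$ of Lemma \ref{L2.2} is exactly the device that removes this difficulty, and one must then verify that forming this majorant and its $m$-th iterate keeps us inside $\Phi$ and leaves the constants $\delta,\eta$ of Theorem \ref{T3.1} untouched, after which the extra hypothesis $\sup_{t\in[0,\eta]}\varphi(t)\le\delta/2$ of Lemma \ref{L4.2} becomes automatically available for $g=f^m$.
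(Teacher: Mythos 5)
Your proposal is correct and follows essentially the same route as the paper: majorize $\varphi$ by the monotone non-decreasing $\phi$ from Lemma \ref{L2.2}, pass to the iterate $f^{j}$ for which $\phi^{j}(\eta)\le\delta/2$ so that Lemma \ref{L4.2} applies, and then return to $f$ via Lemma \ref{L2.1}. Your version is in fact slightly more careful than the paper's at two points the paper leaves implicit, namely the verification that $\phi^{m}\in\Phi$ and the residue-class argument recovering $d$-convergence of the full Picard sequence of $f$ from that of $f^{m}$.
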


\begin{proof} Let $\delta,\eta$ be defined as in (ii) of Theorem
\ref{T3.1}. By Lemma \ref{L2.2} function $\phi:[0,+\infty) \rightarrow [0,+\infty)$ defined by $\phi(0)=0$ and
$$\phi(t)=\sup_{t\in [0,\eta]}\varphi(t)$$
is monotone non-decreasing comparison function such that $\DS{\lim_{n\rightarrow+\infty}\varphi^n(r) = 0}$, 
which satisfies $$\varphi(t)\le\phi(t),$$ for any $t> 0$.

If $\phi(\eta )\le\delta/2$, then from Lemma \ref{L4.2}, it follows that $f$ has a unique fixed
point $y\in X$ and for each $x\in X$ the sequence of Picard iterates defined by $f$ at $x$ converges, in the
topology $\tau_d$, to $y$.

Now assume that ${\DS \phi(\eta )> \frac {\delta}2}$. Then there exists the least positive integer $j>1$
such that $\DS \phi^j(\eta )\le\delta/2$. Also, we have that
$$d(f^j(x),f^j(y))\le \phi^j(d(x,y)),$$
for any $x,y\in X$, $\phi^j\in \Phi$. By Lemma \ref{L4.2} we obtain that $f^j$ has a unique fixed point, say
$z\in X$ and for each $x\in X$ the sequence of Picard iterates defined by $f^j$ at $x$ converges, in the
topology $\tau_d$, to $z$. From Lemma \ref{L2.1} it follows that $f$ has a unique fixed point $z\in X$ and for
each $x\in X$ the sequence of Picard iterates defined by $f$ at $x$ converges, in the topology $\tau_d$, to $z$.
\end{proof}

\begin{Theorem}\label{T4.2} Let $(X,d,\tau_{d})$ be a $d-CS$ space which
satisfies the properties (W3) and (JMS), $f:X\rightarrow X$ be left sequentially $d$-continuous and $\varphi_1,\varphi_2,\varphi_3,\in \Phi$. If
\begin{equation}
d(f(x),f(y))\le\max\{\varphi_1(d(x,y)),\varphi_2(d(x,f(x)),\varphi_3d(y,f(y))\})
\end{equation}
for any $x,y\in X$, then $f$ has a unique fixed point $y\in X$ and for each $x\in X$ the sequence of Picard
iterates defined by $f$ at $x$ converges in $d$. Also, 
Picard iterates converges the same limit point in the topology $\tau_d$.
\end{Theorem}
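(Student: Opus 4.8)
The strategy is to reduce the extended-contraction condition to the ordinary nonlinear-contraction condition already handled by Theorem \ref{T4.1}, by building an auxiliary function on which $f$ becomes a genuine $\varphi$-contraction. First I would invoke Lemma \ref{L4.1} to replace the three functions $\varphi_1,\varphi_2,\varphi_3$ by a single monotone non-decreasing $\varphi\in\Phi$ that dominates all of them; then the hypothesis becomes
\[
d(f(x),f(y))\le\max\{\varphi(d(x,y)),\varphi(d(x,f(x))),\varphi(d(y,f(y)))\}
\]
for all $x,y\in X$. The key quantity to control is $A(x):=d(x,f(x))$ along an orbit. Setting $x_n=f^n(x_0)$ and putting $y=x$, $x\mapsto f(x)$ in the displayed inequality (i.e. evaluating at the pair $(x_n,x_{n+1})$ in a suitable way), one gets $d(x_{n+1},x_{n+2})\le\max\{\varphi(d(x_n,x_{n+1})),\varphi(d(x_{n+1},x_{n+2}))\}$; since $\varphi(r)<r$ for $r>0$ the second term cannot be the maximum unless both sides vanish, so $d(x_{n+1},x_{n+2})\le\varphi(d(x_n,x_{n+1}))$. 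Hence $A(x_n)=d(x_n,x_{n+1})$ is non-increasing and, by Proposition \ref{P2.2} (the conditions defining $\Phi$ guarantee $\varphi^n(r)\to0$), $A(x_n)\to0$.

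Next I would show $(x_n)$ is a left Cauchy sequence, mimicking the induction in the proof of Lemma \ref{L4.2}. With $\delta,\eta$ from Theorem \ref{T3.1}(ii) and $\phi(t)=\sup_{[0,\eta]}\varphi(t)$ from Lemma \ref{L2.2}, choose $k$ with $d(x_k,x_{k+1})\le\min\{\delta/2,\eta\}$; then prove $d(x_k,x_{k+n})\le\eta$ by induction, using at the inductive step that
\[
d(x_{k+1},x_{k+n+1})=d(f(x_k),f(x_{k+n}))\le\max\{\varphi(d(x_k,x_{k+n})),\varphi(A(x_k)),\varphi(A(x_{k+n}))\}\le\varphi(\eta)\le\delta/2,
\]
where the last bound holds once $k$ is large enough that $A(x_k)\le\eta$ and $A(x_{k+n})\le\eta$; combining with $d(x_k,x_{k+1})\le\delta/2$ and Theorem \ref{T3.1}(ii) gives $d(x_k,x_{k+n+1})\le\eta$. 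If $\phi(\eta)>\delta/2$ one first passes to $f^j$ for the least $j$ with $\phi^j(\eta)\le\delta/2$, exactly as in Theorem \ref{T4.1}, noting that $f^j$ satisfies the same type of extended-contraction inequality with $\varphi^j$ in place of $\varphi$ — this requires iterating the max-inequality and absorbing the mixed $A(\cdot)$ terms via the orbit estimate above. Then $d(x_{k+n},x_{k+n+m})\le\phi^n(\eta)\to0$, so $(x_n)$ is left Cauchy and, by $d-CS$-completeness, converges in $d$ to some $y$.

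Finally, since $f$ is assumed left sequentially $d$-continuous, $\lim^d f^{n+1}(x)=f(y)$, and (W3) forces $f(y)=y$; Proposition \ref{P3.1} upgrades $d$-convergence to $\tau_d$-convergence, and Lemma \ref{L2.1} transfers the orbit convergence from $f^j$ back to $f$. Uniqueness follows because if $f(y^*)=y^*$ then $A(y)=A(y^*)=0$, so $d(y,y^*)=d(f(y),f(y^*))\le\max\{\varphi(d(y,y^*)),0,0\}=\varphi(d(y,y^*))$, whence $d(y,y^*)=0$ and, by (W3) (or (A0)-type reasoning via the $d-C$ topology), $y=y^*$. I expect the main obstacle to be the reduction to $f^j$: one must verify carefully that the extended-contraction inequality for $f$ yields a clean nonlinear-contraction inequality for $f^j$ along orbits, handling the fact that the $d(x,f(x))$ terms are not a priori comparable to $d(x,y)$ except through the already-established decay of $A(x_n)$, so the Cauchy argument and the power-passing argument have to be interleaved rather than applied sequentially.
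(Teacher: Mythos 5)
Your proposal is correct in substance, but the argument you actually carry out is genuinely different from the paper's --- and, amusingly, the paper does literally what your opening sentence announces and you then abandon. The paper defines $d^*(x,y)=\max\{d(x,y),d(x,f(x)),d(y,f(y))\}$ for $x\ne y$ (and $d^*(x,x)=0$), checks that $(X,d^*,\tau_{d^*})$ is again a $d-CS$ space satisfying (W3) and (JMS) (left Cauchy in $d^*$ implies left Cauchy in $d$ because $d\le d^*$, and the (JMS) constants only get enlarged), verifies $d^*(f(x),f(y))\le\varphi(d^*(x,y))$ using $d(f(x),f^2(x))\le\varphi(d(x,f(x)))$, and then simply invokes Theorem \ref{T4.1} in the new space. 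You instead redo the orbit analysis of Lemma \ref{L4.2} by hand: you extract $A(x_{n+1})\le\varphi(A(x_n))$ from the max-inequality via the ``second term cannot be the maximum'' observation (which the paper uses in Theorem \ref{T5.1} but not here), run the $\delta$--$\eta$ induction with the extra $\varphi(A(\cdot))$ terms absorbed by the decay of $A$ along the orbit, and handle the case $\phi(\eta)>\delta/2$ by interleaving the passage to $f^j$ with these estimates. Both routes work: the paper's is shorter because all the orbit bookkeeping is delegated to the already-proved Theorem \ref{T4.1}, while yours is more self-contained and makes explicit exactly where the $d(x,f(x))$ terms are controlled --- precisely the point you flag as delicate, and the one the auxiliary-metric construction is designed to hide. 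One small gap to repair in your uniqueness step: $A(y)=d(y,f(y))=d(y,y)$ need not vanish a priori in a $d-C$ space, so before writing $\max\{\varphi(d(y,y^*)),0,0\}$ you should first apply the contractive inequality at the pair $(y,y)$ to get $d(y,y)\le\varphi(d(y,y))$ and hence $d(y,y)=0$ (and likewise for $y^*$).
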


\begin{proof}  By Lemma \ref{L4.1} there exists monotone non-decreasing
$\varphi\in\Phi$ such that $$\varphi_k (x)\le\varphi (x),$$ for
each $1\le k\le 3$ and $x\ge 0$. We get that
$$d(f(x),f(y))\le\varphi (\max\{d(x,y),\varphi d(x,f(x),\varphi d(y,f(y))\}).$$ 

Let $\delta,\eta$ be defined as in (ii) of Theorem
\ref{T3.1}. By Lemma \ref{L2.2} function $\phi:[0,+\infty) \rightarrow [0,+\infty)$ defined by $\phi(0)=0$ and
$$\phi(t)=\sup_{t\in [0,\eta]}\varphi(t)$$
is monotone non - decreasing comparison function such that $\DS{\lim_{n\rightarrow\infty}\varphi^n(r) = 0}$, which satisfies $$\varphi(t)\le\phi(t),$$ for any $t> 0$.

Define $d^*: X\times X\rightarrow [0,+\infty)$ by $d^*(x,y)=0$ if $x=y$ and
$$d^*(x,y)=\max\{d(x,y),d(x,f(x)),d(y,f(y))\}$$
otherwise. Then $(X,d^*,\tau_{d^*})$ is $d-C$ space. Also, we have 
$$d(x,y)\le d^*(x,y)$$
for any $x,y\in X$. 

So, if $(x_n)\subseteq X$ is an arbitrary left Cauchy sequence in $(X,d^*,\tau_{d^*})$ then it is left Cauchy sequence in $(X,d,\tau_{d})$, which implies its convergence. Hence, $(X,d^*,\tau_{d^*})$ is $d-CS$ space. 

From $\lim d^*(x_n,x)=0$ and $\lim d^*(x_n,y)=0$ it follows $\lim d(x_n,x)=0$ and $\lim d(x_n,y)=0$, which implies $x=y$, because $(X,d,\tau_{d})$ has property (W3). So, $(X,d^*,\tau_{d^*})$ has property (W3).

From $d^*(x,z)+d^*(y,z)<\delta$ we get following inequalities:
$d(x,z)+d(y,z)<\delta$, $d(x,f(x))+d(y,z)<\delta$ and $d(x,f(x))+d(y,f(y))<\delta$, which implies that $$d^*(x,y)\le \eta + 2\delta = \eta^*,$$
because $(X,d,\tau_{d})$ has property (JMS). So, $(X,d^*,\tau_{d^*})$ has property (JMS).

Let $x,y\in X$. From
\begin{equation*}
 d(f(x),f^2(x))\le \phi( d(x,f(x))), d(f(y),f^2(y))\le \phi( d(y,f(y)))
 \end{equation*}
 and
 \begin{equation*}
 d(f(x),f(y))\le \varphi d(x,y),
\end{equation*}
we obtain
$$d^*(f(x),f(y))\le\varphi (d^*(x,y)).$$

Now statement follows from Theorem \ref{T4.1}.
\end{proof}

The following theorem extends the previous results presented by M.\ Marjanovi\'c \cite{MM}.

\begin{Theorem} \label{T4.3}
Let $(X,d,\tau_{d})$ be a $d-CS$ space which
satisfies the properties (W3) and (JMS), $f:X\rightarrow X$ be left sequentially $d$-continuous and $\varphi\in\Phi$. If
\begin{equation}
d(f^{n+1}(x),f^{n+1}(y))\le\varphi(\max_{0\le i\le n}\{d(f^i(x),f^i(y))\}),
\end{equation}
for any $x,y\in X$, then $f$ has a unique fixed point $y\in X$ and for each $x\in X$ the sequence of Picard
iterates defined by $f$ at $x$ converges in $d$. Also, 
Picard iterates converges the same limit point in the topology $\tau_d$.
\end{Theorem}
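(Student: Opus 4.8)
The plan is to reduce the statement to Theorem~\ref{T4.1} applied to the iterate $g=f^{n+1}$ with respect to a suitably enlarged distance. First I would introduce $d_*\colon X\times X\to[0,+\infty)$ by $d_*(x,x)=0$ and
$$d_*(x,y)=\max_{0\le i\le n}d\bigl(f^i(x),f^i(y)\bigr)\qquad(x\ne y).$$
As in Lemma~\ref{L3.1} and the proof of Theorem~\ref{T4.2}, the triple $(X,d_*,\tau_{d_*})$ is automatically a $d-C$ space and $d(x,y)\le d_*(x,y)$, so every $d_*$-left Cauchy sequence $(x_j)$ is $d$-left Cauchy and hence $d$-converges to some $x\in X$; left sequential $d$-continuity of $f$ then upgrades $d(x_j,x)\to 0$ to $d(f^i(x_j),f^i(x))\to 0$ for $0\le i\le n$, i.e. $d_*(x_j,x)\to 0$, so by Proposition~\ref{P3.1} the sequence also converges in $\tau_{d_*}$ and $(X,d_*,\tau_{d_*})$ is $d-CS$. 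Property (W3) for $d_*$ is immediate from $d\le d_*$ and (W3) for $d$, while property (JMS) for $d_*$ follows by applying (JMS) of $d$ in each of the finitely many coordinates $i=0,\dots,n$ and taking the finite maximum of the resulting (finite) upper limits.

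Next I would pass, exactly as in the proofs of Theorems~\ref{T4.1} and~\ref{T4.2}, to a monotone non-decreasing majorant of $\varphi$: by Lemma~\ref{L2.2}, together with Proposition~\ref{P2.1}, there is a monotone non-decreasing $\phi\in\Phi$ with $\varphi(t)\le\phi(t)$ for all $t\ge 0$. The heart of the argument is to check that $g=f^{n+1}$ is a $\phi$-contraction for $d_*$. Fix $x,y\in X$. For each $i$ with $0\le i\le n$, the contractive hypothesis applied to the pair $(f^i(x),f^i(y))$ gives
$$d\bigl(f^{n+1+i}(x),f^{n+1+i}(y)\bigr)\le\varphi\Bigl(\max_{i\le l\le i+n}d(f^l(x),f^l(y))\Bigr);$$
since $\{i,\dots,i+n\}\subseteq\{0,\dots,2n\}$ and $\{0,\dots,2n\}$ is covered by the index windows $\{0,\dots,n\}$ and $\{n+1,\dots,2n+1\}$ on which the corresponding maxima of $d(f^l(x),f^l(y))$ equal $d_*(x,y)$ and $d_*(gx,gy)$ respectively, the argument of $\varphi$ is at most $\max\{d_*(x,y),d_*(gx,gy)\}$, so monotonicity of $\phi$ and $\varphi\le\phi$ give $d(f^{n+1+i}(x),f^{n+1+i}(y))\le\phi\bigl(\max\{d_*(x,y),d_*(gx,gy)\}\bigr)$. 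Taking the maximum over $i$ yields $d_*(gx,gy)\le\phi(\max\{d_*(x,y),d_*(gx,gy)\})$; here $d_*(gx,gy)\ge d_*(x,y)>0$ is impossible, as it would force $d_*(gx,gy)\le\phi(d_*(gx,gy))<d_*(gx,gy)$, so $d_*(gx,gy)\le\phi(d_*(x,y))$, and the inequality is trivial when $d_*(x,y)=0$. I expect this "sliding window" estimate — and the circumstance that it must be fed through a \emph{monotone} comparison function, which is precisely why the preliminary majorization is made — to be the main obstacle.

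Finally I would assemble the known results. Applying Theorem~\ref{T4.1} to the $d-CS$ space $(X,d_*,\tau_{d_*})$ (which satisfies (W3) and (JMS)), the map $g=f^{n+1}$ and the comparison function $\phi\in\Phi$, we obtain a unique fixed point $z$ of $f^{n+1}$ such that for every $x\in X$ the Picard iterates of $f^{n+1}$ at $x$ converge to $z$ in $d_*$ and, by Proposition~\ref{P3.1}, in $\tau_{d_*}$. Lemma~\ref{L2.1} with $l=n+1$ then shows that $z$ is the unique fixed point of $f$ and that the Picard iterates of $f$ converge to $z$ in $\tau_{d_*}$, hence also in $\tau_d$ because $d\le d_*$ forces $\tau_d\subseteq\tau_{d_*}$. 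For convergence in $d$ itself, note that $f^i(z)=z$ for all $i$, so $d_*\bigl(f^{(n+1)m}(x),z\bigr)=\max_{0\le i\le n}d\bigl(f^{(n+1)m+i}(x),z\bigr)\to 0$; writing an arbitrary $k$ as $(n+1)m+i$ with $0\le i\le n$ gives $d(f^k(x),z)\le d_*(f^{(n+1)m}(x),z)\to 0$ as $k\to\infty$, and Proposition~\ref{P3.1} once more re-derives the $\tau_d$-convergence.
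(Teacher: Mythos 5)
Your proof is correct and follows the same route as the paper: pass to the auxiliary distance $d_*(x,y)=\max_{0\le i\le n}d(f^i(x),f^i(y))$ (the paper's Lemma \ref{L3.1}), apply Theorem \ref{T4.1} to $f^{n+1}$ on $(X,d_*,\tau_{d_*})$, and finish with Lemma \ref{L2.1}. In fact your sliding-window estimate --- bounding the argument of $\varphi$ by $\max\{d_*(x,y),d_*(f^{n+1}(x),f^{n+1}(y))\}$ and then using the monotone majorant $\phi$ to extract $d_*(f^{n+1}(x),f^{n+1}(y))\le\phi(d_*(x,y))$ --- supplies a step the paper glosses over: the paper simply asserts $d_*(f^{n+1}(x),f^{n+1}(y))\le\varphi(d_*(x,y))$, which does not follow immediately, since applying the contractive hypothesis at the pair $(f^i(x),f^i(y))$ involves iterates with indices up to $2n$ and $\varphi$ is not assumed monotone.
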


\begin{proof} By Lemma \ref{L3.1} space $(X,d_*,\tau_{d_*})$ be a $d-CS$ space which satisfies the properties (W3) and (JMS).
Further, we get that
$$
\rho_*(f^{n+1}(x),f^{n+1}(y))\le\varphi(d_*(x,y)).
$$
By Theorem \ref{T4.1} it follows that  $f^{n+1}$  has unique  fixed point, say $q$ which is unique limit of all Picard sequences defined by $f^{n+1} $. 
By Lemma \ref{L2.1} we obtain that  $q$ is unique fixed point for $f$ and unique limit of all Picard sequences defined by $f$.
\end{proof}

\section{Fixed point result on quasi-metric space}
In this section we give a result for nonlinear contractions in quasi-metric spaces.

\begin{Theorem}\label{T5.1} Let $(X,d)$ be a quasi-metric space,
$f:X\rightarrow X$ be left sequentially $d$-continuous and $\psi_1,\psi_2,\psi_3\in \Phi$. If
$$d(f(x),f(y))\le\max\{\psi_1(d(x,y)),\psi_2(d(x,f(x))),\psi_3(d(y,f(y)))\}$$
for any $x,y\in X$, then $f$ has a unique fixed point $y\in X$ and for each $x\in X$ the sequence of Picard
iterates defined by $f$ at $x$ converges in $d$. Also, 
Picard iterates converges the same limit point in the topology $\tau_d$.
\end{Theorem}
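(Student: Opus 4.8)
The plan is to derive Theorem~\ref{T5.1} from Theorem~\ref{T4.2}. Given a quasi-metric $d$ on $X$, Definition~\ref{D3.1} equips $X$ with the topology $\tau_d$, so that $(X,d,\tau_d)$ is automatically a $d-C$ space, and by Proposition~\ref{P3.1} the balls $B(x,r)$ form a local base at each point, so $\tau_d$ is exactly the usual topology of the quasi-metric space. A left Cauchy sequence in the sense of Definition~\ref{D3.4} is precisely a left Cauchy sequence in the sense of Definition~\ref{D2.2}, so completeness of $(X,d)$ means that $(X,d,\tau_d)$ is a $d-CS$ space. The remaining hypotheses of Theorem~\ref{T4.2} are the properties (W3) and (JMS); once these are in hand, Theorem~\ref{T4.2} applies verbatim with $\psi_1,\psi_2,\psi_3$ playing the role of $\varphi_1,\varphi_2,\varphi_3$, yielding the unique fixed point $y$, $d$-convergence of every Picard orbit to $y$, and hence, by Proposition~\ref{P3.1}, convergence in $\tau_d$.

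So the work is in verifying (JMS) and (W3). For (JMS), I would check condition (ii) of Theorem~\ref{T3.1} directly: if $d(x,z)+d(y,z)$ is small then, using the triangle inequality (A3) together with its consequence for the reversed arguments recorded in the Remark after Theorem~\ref{T3.1}, $d(x,y)$ stays bounded, and (JMS) then follows from the equivalence in Theorem~\ref{T3.1}. For (W3), given $\lim d(x_n,x)=0$ and $\lim d(x_n,y)=0$ one combines the triangle inequality with the left-Cauchy/left-convergence bookkeeping to force $d(x,y)=0$, whence $x=y$ by (A1).

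The step I expect to be the genuine obstacle is (W3): because the triangle inequality of a quasi-metric is one-sided, $\lim d(x_n,x)=0$ does not on its own control $d(x,x_n)$, so — in contrast with symmetric spaces, quasi $b$-metric spaces or $b$-spaces — uniqueness of $d$-limits is not a formality and must be argued with care (this is also where the verification of (JMS) is most delicate). If one wishes to bypass the reduction, the alternative is to repeat the argument of Lemma~\ref{L4.2} and Theorems~\ref{T4.1}--\ref{T4.2} directly in $(X,d)$: set $\psi_0=\max\{\psi_1,\psi_2,\psi_3\}\in\Phi$, which is monotone non-decreasing with $\psi_0^n(t)\to 0$ by Proposition~\ref{P2.2}; deduce $d(f^n(x),f^{n+1}(x))\to 0$; and then show $(f^n(x))$ is left Cauchy, the role of Theorem~\ref{T3.1}(ii) now being taken directly by (A3), which makes the needed inequality hold with $\eta$ comparable to $\delta$. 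Completeness supplies $y$ with $d(f^n(x),y)\to 0$, left sequential $d$-continuity gives $d(f^{n+1}(x),f(y))\to 0$, and the argument closes by identifying $y$ with $f(y)$ — once more the point at which limit-uniqueness is used — while uniqueness of the fixed point follows from the contractive inequality exactly as in the proof of Lemma~\ref{L4.2}.
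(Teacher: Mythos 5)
Your main route---deducing Theorem~\ref{T5.1} from Theorem~\ref{T4.2} by checking that a complete quasi-metric space is a $d-CS$ space satisfying (W3) and (JMS)---cannot be completed, because neither property is a consequence of (A1) and (A3). The triangle inequality of a quasi-metric gives $d(x,y)\le d(x,z)+d(z,y)$, whereas the hypothesis of (JMS) (equivalently, of condition (ii) in Theorem~\ref{T3.1}) controls $d(y,z)$, not $d(z,y)$; the Remark after Theorem~\ref{T3.1} only restates (ii) with a maximum and does not supply the reversed inequality you invoke. This is not a removable technicality: on $X=[0,+\infty)$ put $d(x,y)=y-x$ for $y\ge x$ and $d(x,y)=e^{x}$ for $x>y$; one checks that $(X,d)$ is a quasi-metric space, yet for $x_n=z_n=n$ and $y_n=n-\frac1n$ one has $d(x_n,z_n)=0$, $d(y_n,z_n)=\frac1n\to0$ and $d(x_n,y_n)=e^{n}\to+\infty$, so (JMS) fails. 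Likewise (W3) fails in general: take $X=\{a,b\}\cup\{c_n\}$ with $d(c_n,a)=d(c_n,b)=\frac1n$, $d(c_n,c_m)=\left|\frac1n-\frac1m\right|$ and all remaining distances equal to $1$; then $(c_n)$ $d$-converges to both $a$ and $b$. So the reduction is not available, and indeed the paper does not attempt it.

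The paper proves Theorem~\ref{T5.1} directly, essentially along the lines of what you call the alternative: it sets $\psi=\max\{\psi_1,\psi_2,\psi_3\}$, proves that \emph{both} $d(x_n,x_{n+1})\to0$ and $d(x_{n+1},x_n)\to0$ (both orders are needed precisely because of the asymmetry---your sketch omits the second), and obtains the left Cauchy property by the standard contradiction argument in which the role you assign to (JMS) is played by (A3) applied with the arguments in the correct order. So your fallback is the right instinct, but it is only a sketch, and it leaves the decisive step open: after completeness produces $y$ with $d(x_n,y)\to0$ and left sequential $d$-continuity gives $d(x_{n+1},f(y))\to0$, the identification $f(y)=y$ is exactly a (W3)-type assertion of uniqueness of left limits, which you acknowledge ``must be argued with care'' but never argue, and which the counterexamples above show cannot be taken for granted. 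As written, the proposal therefore does not establish existence of the fixed point; to close it you must extract $d(y,f(y))=0$ (or $d(f(y),y)=0$) from the contractive inequality itself rather than from limit uniqueness, or impose an additional hypothesis guaranteeing that left limits are unique.
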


\begin{proof}  Let $\psi:[0,+\infty) \rightarrow [0,+\infty)$ be a mapping defined by formula $$\psi(t)=\max\{\psi_1(t),\psi_2(t),\psi_3(t)\}.$$ Then $\psi(t)\in\Phi$ and
$$ d(f(x),f(y))\le \max\{\psi(d(g(x),g(y))),\psi(d(g(x),f(x))),\psi(d(g(y),f(y)))\}.$$

Let $x_{0}\in X$ be arbitrary and $(f(x_{n}))$ sequence of Picard iterates of $f$ at point $x_0$. 

Now we shall proved that  $\lim d(f(x_{n-1}),f(x_{n})) = 0$.

Let $d(f(x_{n-1}),f(x_{n})) = a_{n}$, $a_{1} = b_{1}$ and
$b_{n+1} = \psi(b_{n})$. From 
$$d(f(x),f^2(x))\le\psi(\max\{d(x,f(x)),d(x,f(x)),d(f(x),f^2(x)))\}$$
it follows
$$d(f(x),f^2(x))\le\psi(d(x,f(x))).$$
So
\begin{eqnarray}
a_n&=&d(f(x_{n-1}),f(x_{n})) \leq \psi(d(f(x_{n-2})) \leq \cdots\le\psi^{n-1}(d(f(x_{0}),f(x_{1})))\nonumber \\  
&=& \psi^{n-1}(b_{1})=b_n,\nonumber
\end{eqnarray}
it follows that $0 \leq a_{n} \leq b_{n}$, because $d$ is nonnegative mappings. So $\lim  b_{n} = \lim  a_{n} =0$, because $\lim b_n=0$.

Further we shall proved that  $\lim d(f(x_{n}),f(x_{n-1})) = 0$.

Let $d(f(x_{n}),f(x_{n-1})) = c_{n}$, $c_{1} = d_{1}$ and
$d_{n+1} = \psi(d_{n})$. From 
$$d(f^2(x),f(x))\le\psi(d(f(x),x))$$
%
%
%
we obtain
\begin{eqnarray*}
c_n&=&d(f(x_{n}),f(x_{n-1})) \leq 
\psi(d(f(x_{n-1}),f(x_{n-2}))) \leq \cdots\\ 
&\leq& \psi^{n-1}(d(f(x_{0}),f(x_{1})))= \psi^{n-1}(d_{1})=d_n,
\end{eqnarray*}
it follows that $0 \leq c_{n} \leq d_{n}$, because $d$ is nonnegative mappings. So $\lim  d_{n} = \lim  c_{n} =0$, because $\lim c_n=0$.

Now we shall proved that $(x_n)$ is a left Cauchy sequence.
Assume now that $(x_n)$ is not a Cauchy sequence. Then there exists
$\varepsilon >0$ and sequences of positive integers
$(m_{k})$ and $(n_{k})$, such that
for every $k \in N$:

\noindent i) $m_{k} > n_{k} \geq k$

\noindent ii) $ h_{k} = d(f(x_{n_{k}}),f(x_{m_{k}})) \geq \varepsilon$,
where $m_{k}$ is the smallest positive iteger which satisfies ii).

It follows that $d(f(x_{m_{k} -1}),f(x_{n_{k}})) < \varepsilon$ for any $k$.
So
\begin{eqnarray} h_{k}& =& d(f(x_{n_{k}}),f(x_{m_{k}}) \leq d(f(x_{n_{k}}),f(x_{n_{k}+1}))+d(f(x_{n_{k}+1}),f(x_{m_{k}}))  \nonumber \\ 
&\leq& a_{n_{k}}+\varepsilon \leq  a_{k} + \varepsilon, \nonumber
\end{eqnarray}
which implies that $\lim h_{n} = \varepsilon$. Hence
\begin{eqnarray*}
h_{k}&=& d(f(x_{n_{k}}),f(x_{m_{k}})) \\ 
&\leq& d(f(x_{n_{k}}),f(x_{n_{k}+1})) + d(f(x_{n_{k}+1}),f(x_{m_{k}+1})) \\
&+&d(f(x_{m_{k}+1}),f(x_{m_{k}}))\\ 
&\leq& a_{n_k+1}+\max\{\psi(h_k),\psi(a_{m_k+1}),\psi(c_{m_k+1})\}+ c_{m_k+1}\\
&\le& a_{k} + \psi(h_{k})+c_k.
\end{eqnarray*} 
We get that for some $k$ that
$\varepsilon \leq h_k\leq \varphi(h_k)$ which is contradiction.
So $x_{n}$ is a left Cauchy sequence. It is convergent because $(X,d)$ is complete. Let $y\in X$ be its limit. Then $\lim f(x_n)=y$, because $f$ is left sequentially $d$-continuous.

\noindent Let $y_0\ne x_0$ be arbitrary and $y_n$  be sequence of Picard iterates of $f$ at point $y_0$. Hence
$$d(f(x_n),f(y_n)) \le \max\{ \psi(d(x_n,y_n)),\psi(d(x_n,f(x_n))),\psi(d(y_n,f(y_n)))\}.$$
\noindent When $n\rightarrow\infty$, we get that $d(f(x_n),f(y_n))\rightarrow 0$, because $\lim d(x_n,f(x_n)=0$,  $\lim \psi^n(d(x_0,y_0)=0$ and $\lim d(y_n,f(y_n))=0$. Hence, all sequences of Picard iterations have same limit.

If there exists $p,q\in X$ such that $p=f(p)$ and $q=g(q)$ then 
$$ d(p,q) \le \max\{\psi(d(p,p)),\varphi(d(p,q)),
\varphi(d(q,q))\}=\varphi(d(p,q)).$$ So $d(p,q)=0$, which implies $y_1=y_2$.\end{proof}
\begin{Remark}Note that Theorem \ref{T5.1} is an extension of some results from Pasicki, Theorem 2.5. in \cite{LP1} and Theorem 3.1 in \cite{LP2}.
\end{Remark}
\noindent\textbf{Acknowledgments:}
First author supported by Serbian Ministry of Science, Technological development and innovation - Grant number 451-03-47/2023-01/ 200105  3.2.2023.\\
\noindent\textbf{Author's contributions}: All authors have read and agreed to the published version of the~manuscript. \\
\noindent\textbf{Funding}: This research received no external~funding.\\ 
\noindent\textbf{Conflicts of interest}: The authors declare no conflict of~interest. \\
\noindent\textbf{Availability of data and materials}:  Data sharing is not applicable to this article as no data set were
generated or analyzed during the current study.

\end{document}